\documentclass[12pt]{amsart}
\usepackage[pdftex]{graphicx,color}
\usepackage{amssymb,latexsym,bm,color}
\usepackage[dvips,centering,includehead,width=15.1cm,height=22.7cm]{geometry}
\usepackage{amsmath}
\usepackage{epsfig}
\usepackage{amssymb}
\input{xy} \xyoption{all}
\usepackage{color}

\definecolor{darkgreen}{rgb}{0.0, 0.7, 0.0}


\setcounter{tocdepth}{2} \setcounter{tocdepth}{1}
\numberwithin{equation}{section}

\newtheorem{theorem}{Theorem}[section]

\newtheorem{corollary}[theorem]{Corollary}
\newtheorem{proposition}[theorem]{Proposition}

\theoremstyle{definition}
\newtheorem{definition}[theorem]{Definition}
\newtheorem{example}[theorem]{Example}
\newtheorem{remark}[theorem]{Remark}

\newtheorem{notation}[theorem]{Notation}

\newcommand{\thmref}[1]{Theorem~\ref{#1}}
\newcommand{\secref}[1]{\S\ref{#1}}

\newcommand{\corref}[1]{Corollary~\ref{#1}}
\newcommand{\subsecref}[1]{\S\ref{#1}}

\newcommand{\defref}[1]{Definition~\ref{#1}}
\newcommand{\remref}[1]{Remark~\ref{#1}}

\newcommand{\exmpref}[1]{Example~\ref{#1}}

\newcommand{\Coeff}{\mathop{\text{\rm Coeff}}}
\newcommand{\<}{\langle}
\renewcommand{\>}{\rangle}


\renewcommand{\H}{{\mathcal H}}

\newcommand{\D}{{\mathcal D}}

\renewcommand{\O}{{\mathcal O}}



\DeclareMathOperator {\Area}{Area}
\DeclareMathOperator {\mult}{mult}

\DeclareMathOperator {\dive}{div}

\renewcommand{\aa}{ {\bf a} }

\newcommand{\RR}{{\mathbb R}}
\newcommand{\ZZ}{{\mathbb Z}}
\newcommand{\Z}{{\mathbb Z}}

\newcommand{\Q}{{\mathbb Q}}

\newcommand{\PP}{{\mathbb P}}


\newcommand{\oooo}{\multiput(0,0)(10,0){4}{\circle{2}}}

\newcommand{\Eeee}{\put(1,0){\line(1,0){8}}}
\newcommand{\eEee}{\put(11,0){\line(1,0){8}}}
\newcommand{\eeEe}{\put(21,0){\line(1,0){8}}}


\begin{document}
\title[Fock spaces and refined Severi degrees]{Fock spaces and refined Severi degrees}

\author{Florian Block \and Lothar G\"ottsche}
\address{Florian Block, Department of Mathematics, University of
  California, Berkeley, Berkeley, USA}
\email{block@math.berkeley.edu}
\address{Lothar G\"ottsche, International Centre for Theoretical Physics, Strada Costiera 11, 34151 Trieste, Italy}
\email{gottsche@ictp.it}
\thanks {\emph {2010 Mathematics Subject Classification: Primary:
    14N10. Secondary: 14N35, 14T05. 
}}
\thanks{The first author was supported by a Feodor Lynen-Fellowship of the Alexander von Humboldt-Foundation.}

\begin{abstract}
A convex lattice polygon $\Delta$ determines a pair $(S,L)$ of a toric surface together with 
an ample toric line bundle on $S$. The Severi degree $N^{\Delta,\delta}$ is the number of $\delta$-nodal curves in the complete linear system $|L|$ passing through $\dim|L|-\delta$ general points.
Cooper and Pandharipande showed that in the case of  $\PP^1\times
\PP^1$ the Severi degrees can be computed as the matrix elements of an
operator on a Fock space.
In this note we want to generalize and extend this result in two ways. First we show that it holds more generally for $\Delta$ a so called 
$h$-transverse lattice polygon. This includes the case of $\PP^2$ and rational ruled surfaces, but also many other, also singular, surfaces. Using a deformed version of the Heisenberg algebra, we extend the result to the refined Severi degrees defined and studied by G\"ottsche and Shende  and by  Block and G\"ottsche. 
For $\Delta$ an $h$-transverse lattice polygon, one can, following
Brugall\'e and Mikhalkin,
 replace the count of 
tropical curves by a count of marked floor diagrams, which are slightly simpler combinatorial objects.  We show that these floor diagrams are the Feynman diagrams of certain operators on a Fock space, proving the result. 
\end{abstract}
\maketitle

\section{Introduction}
\label{sec:intro}

A $\delta$-nodal curve is a reduced (not necessarily irreducible) curve with $\delta$ simple nodes as only singularities. 
The \emph{Severi degrees} $N^{d,\delta}$ are the degrees of the Severi varieties parametrizing $\delta$-nodal plane curves of degree $d$. Equivalently, $N^{d, \delta}$ is the number of $\delta$-nodal 
plane curves of degree $d$ through $\tfrac{(d+3)d}{2} - \delta$ generic points in
the complex projective plane $\PP^2$.
More generally, for $(S,L)$ a pair of a surface and line bundle on $S$, the Severi degree $N^{(S,L),\delta}$ is the number of $\delta$-nodal curves in the complete linear system $|L|$ through $\dim|L|-\delta$ general points in $S$.
In this paper we will deal with the case that $S$ is a toric surface and $L$ a toric line bundle. And slightly contrary to the above we denote $N^{(S,L),\delta}(y)$, the number of 
of cogenus $\delta$ curves in $|L|$ passing though $dim |L| - \delta$ general points in $S$, which do
not contain a toric boundary divisor as a component, and analogously for the Welschinger invariants below.

The Severi degrees $N^{d,\delta}$ can be computed by the well-known Caporaso-Harris recursion formula
\cite{CH98}. Similar recursion formulas exist for other rational surfaces, in particular for rational ruled surfaces
\cite{Va00}. The Welschinger invariants $W^{(S,L),\delta}(P)$ are the analogues of the Severi degrees and the Gromov-Witten invariants in real algebraic geometry. Under suitable assumptions they are a count of  real algebraic curves on a real algebraic surface $S$, through a real configuration $P$ of $\dim|L|-\delta$ general points. Differently from the Severi degrees, the $W^{(S,L),\delta}(P)$ in general depend on the point configuration $P$.  
The Severi degrees and the Welschinger invariants of toric surfaces
can be computed via tropical geometry.

In \cite{GS12}  and \cite{BG14} 
\emph{refined Severi degrees} $N^{d,\delta}(y)$ and
$N^{(S,L),\delta}(y)$ are defined for toric surfaces, first for
$\PP^2$  on rational ruled surfaces via a Caporaso-Harris type
recursion 
 and then in general via tropical geometry. 
By definition 
$N^{(S,L),\delta}(1)=N^{(S,L),\delta}$ and
$N^{(S,L),\delta}(-1)=W^{(S,L),\delta}(P)$, for some $P$.

In the recent paper \cite{CoPa12} the Severi degrees of
$\PP^1\times\PP^1$ are expressed as matrix elements of operators on a
Fock space, and applications to other rational surfaces are
given. This note grew out of an attempt to reprove their results in
terms of tropical geometry, and use this approach to extend them to
the refined Severi degrees defined in \cite{GS12}, \cite{BG14}. 
This is done by using combinatorial gadgets called marked floor diagrams, which have been used in tropical enumerative geometry for some time. 
It turns out that this leads  to a generalization of the results of \cite{CoPa12}.
There is a  dictionary relating expressions in the Heisenberg algebra operators to marked floor diagrams, and this can be used to give formulas for the refined Severi degrees as matrix elements of operators on a Fock space, whenever the refined Severi degrees can be expressed in terms of marked floor diagrams. 
This includes the refined Severi degrees of $\PP^2$, rational ruled
surfaces, weighted projective spaces,
and more generally $h$-transverse lattice polygons as studied in
\cite{AB10}, \cite{BM2}.
Here in the introduction we state our result for $\PP^2$ and rational ruled surfaces $\Sigma_m$.
 We denote by $H$ the hyperplane bundle on $\PP^2$. On a Hirzebruch surface $\Sigma_m$ we denote by $F$ the class of fibre of the ruling,  $E$ the section with $E^2=-m$ and $H:=E+mF$. 

We use the notations about the Heisenberg algebra introduced in
\subsecref{Heisenberg} below: the elements $a_k$, $b_k$ with $k\in \Z$ are the generators of a Heisenberg algebra $\H$. The Fock space $F$ is an irreducible representation of $\H$ a vector space with basis consisting of vectors  $v_{\alpha,\beta}$, with $\alpha,\beta$ running through all partitions. 

\begin{theorem}
For $m\in \Z$ let 
$$H_m(t):= \sum_{k>0} b_{-k}b_k+ t\sum_{\|\mu\|=\|\nu\|-m}
a_{-\mu}a_{\nu}.$$
Then 
\begin{enumerate}
\item 
$$N^{d,\delta}(y)=
\left\<v_\emptyset\left|\Coeff_{t^d}\left[H_1(t)^{d(d+3)/2-\delta}\right]\right| v_{(1^d),\emptyset}\right\>,$$
\item 
$$
N^{(\Sigma_m,cF+dH),\delta}(y)=
\left\< v_{(1^c),\emptyset}\left| \Coeff_{t^d}\left[H_m(t)^{\binom{d+1}{2}m+dc+d+c-\delta}\right]\right|  v_{(1^{dm+c}),\emptyset}\right\>.
$$
\end{enumerate}
\end{theorem}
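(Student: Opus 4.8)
The plan is to prove both parts uniformly by realizing marked floor diagrams as the Feynman diagrams of $H_m(t)$, and then specializing the general $h$-transverse correspondence to the triangle of $\PP^2$ and the trapezoid of $\Sigma_m$.

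First I would reduce to a floor diagram count. Since $\PP^2$ and the $\Sigma_m$ with the bundles in the statement are $h$-transverse, the theorem of Brugall\'e--Mikhalkin \cite{BM2} together with the refined tropical count of \cite{GS12}, \cite{BG14} expresses $N^{(S,L),\delta}(y)$ as a sum over marked floor diagrams $D$ of cogenus $\delta$ and the correct boundary data, weighted by a refined multiplicity obtained by multiplying, over the bounded (elevator) edges of the underlying diagram, the quantum integer $(y^{w/2}-y^{-w/2})/(y^{1/2}-y^{-1/2})$ in the edge weight $w$, and dividing by the order of the automorphism group. The number of markings is $\dim|L|-\delta$. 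It therefore suffices to show that the stated matrix elements reproduce this weighted sum.

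Next I would extract a marked floor diagram from a word in the Heisenberg generators. Expanding $H_m(t)^N$, with $N$ the exponent occurring in the statement, gives a sum over length-$N$ words whose letters are either a number summand $b_{-k}b_k$ or a floor summand $t\,a_{-\mu}a_\nu$ with $\|\mu\|=\|\nu\|-m$. Reading the word from right to left (operators act first on the ket), I track the intermediate vector $v_{\alpha,\beta}$, whose partitions record the weights of the elevators crossing the horizontal line between two successive letters. A letter $b_{-k}b_k$ removes and then reinserts an elevator of weight $k$, i.e.\ a marked point sitting on an elevator; a letter $a_{-\mu}a_\nu$ absorbs the elevators of weights $\nu$, emits elevators of weights $\mu$, and records one floor at which the total elevator weight drops by $m$, as prescribed by $\|\mu\|=\|\nu\|-m$ and the slope of $\Delta$. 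The linear order of the $N$ letters is exactly a marking, and nonvanishing of each successive matrix element forces the intermediate state to be an admissible elevator profile, so the surviving words correspond bijectively to marked floor diagrams with the prescribed boundary.

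The weight bookkeeping is the crux. Each elementary matrix element is a structure constant of the deformed algebra $\H$ of \subsecref{Heisenberg}, whose commutators are deformed by $y$; contracting the annihilation part of a letter against the creation parts of earlier letters produces, by the analogue of Wick's theorem, one factor $(y^{w/2}-y^{-w/2})/(y^{1/2}-y^{-1/2})$ per bounded edge of $D$, so that the total algebraic weight of a word equals the refined multiplicity of $D$. The point requiring care is that the over-counting coming from the many words inducing one unmarked diagram, and from the several admissible contraction patterns within a word, is compensated exactly by the automorphism factor $1/|\Aut(D)|$ and by the symmetry inherent in $a_{-\mu}a_\nu$; reconciling this against the chosen normalization of the generators $a_k,b_k$, and the passage from unmarked to marked diagrams, is where I expect the real work to lie. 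Setting $y=1$ should recover the classical floor diagram multiplicity, confirming that $\H$ is deformed in the correct direction.

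Finally I would impose the boundary vectors and specialize. In part (1) the ket is $v_{(1^d),\emptyset}$ and the bra is $v_\emptyset$, so the diagram starts from $d$ weight-one elevators and, with $m=1$, each floor-letter lowers the total weight by one until the empty profile is reached; this forces exactly $d$ floor-letters, which is precisely what $\Coeff_{t^d}$ extracts, and $N=d(d+3)/2-\delta=\dim|dH|-\delta$ counts the marks. In part (2) the ket $v_{(1^{dm+c}),\emptyset}$, the bra $v_{(1^c),\emptyset}$ and the condition $\|\mu\|=\|\nu\|-m$ encode the trapezoidal boundary of $cF+dH=dE+(dm+c)F$ on $\Sigma_m$: the total weight descends from $dm+c$ to $c$ through $d$ floors, again selected by $\Coeff_{t^d}$, and $N=\binom{d+1}{2}m+dc+d+c-\delta=\dim|cF+dH|-\delta$. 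Matching these numerical and boundary data to the floor diagram data of the first paragraph completes the identification; once the weight-preserving dictionary of the third paragraph is established, both statements follow.
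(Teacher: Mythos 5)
Your proposal follows essentially the same route as the paper: reduce the refined Severi degrees to a count of (extended, marked) floor diagrams, identify the words in the expansion of $H_m(t)^N$ with marked floor diagrams via a $y$-deformed Wick's theorem, and match the boundary data of the triangle and trapezoid to the bra and ket vectors; the combinatorial matching you defer as ``where the real work lies'' is precisely what the paper's proof of Wick's theorem carries out, using the $1/\mu_i!$ normalization built into $a_{\pm\mu}$ in place of your $1/|\Aut(D)|$ factor. One normalization slip worth fixing: the refined multiplicity of a floor diagram is $\prod_e [w(e)]_y^2$ over its bounded edges (equivalently one factor $[w(e)]_y$ per edge of the \emph{marked} diagram, in which each bounded edge has been subdivided in two by a black vertex), and correspondingly each bounded edge contributes \emph{two} contractions in the Fock-space computation, one on each side of the $b_{-k}b_k$ letter --- your account omits the square on both sides of the identity, so the claimed equality survives but both quantities are misstated.
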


Roughly what happens is as follows:
There is a one to one correspondence between certain monomials in the Heisenberg algebra operators
and corresponding vertices of a floor diagram including numbers and weights of the ingoing and outgoing edges. 
The commutation relations in the Heisenberg algebra then correspond to the different ways how to connect the vertices to form a marked floor diagram, respecting the edges and their multiplicities. Thus the marked floor diagrams become the Feynman diagrams associated to these monomials in the Heisenberg algebra. The result then follows from a version of Wick's theorem which says that vacuum expectation values of Heisenberg operators on the Fock space can be computed in terms of Feynman diagrams.

This paper is organized as follow: in Section~\ref{Background} we
review the Heisenberg algebra, Fock space, and refined Severi
degrees. In Section~\ref{main_theorems}, we state our main result
(Theorem~\ref{NDEL}) relating refined Severi degrees and the Fock
space and various corollaries. We
introduce marked floor diagrams in
Section~\ref{sec:floor_diagrams}. In Section~\ref{Feynman}, we show
that marked floor diagrams are Feynman diagrams and prove a version of Wick's
theorem implying our results of Section~\ref{main_theorems}.

\section{Background}\label{Background}

\subsection{The Heisenberg algebra and the Fock space}\label{Heisenberg}
We review   the Fock space introduced in \cite{CoPa12}, changing some of the notations and conventions from there
and introducing a  $y$-deformation.
For $n\in \Z$ we define the \emph{quantum number} 
$$[n]_y:=\frac{y^{n/2}-y^{-n/2}}{y^{1/2}-y^{-1/2}}=y^{(n-1)/2}+\cdots+y^{-(n-1)/2}.$$
Note that $[n]_1=n$.
We consider the following $y$-deformed version $\H$ of the Heisenberg algebra modeled on the hyperbolic lattice. $\H$ is the Lie algebra over $\Q[y^{\pm 1/2}]$ generated by operators
$a_n,\ b_n$, for all $n\in \Z$, with the commutation relations
\begin{equation}\label{Heiscomm}[a_n,a_m]=[b_n,b_m]=0, \ [a_n,b_m]=[n]_{y}\delta_{n,-m}.
\end{equation}

The $a_{-n}$, $b_{-n}$  with $n>0$ are called \emph{creation operators}, the 
$a_{n}$, $b_{n}$  with $n>0$ are called \emph{annihilation operators}. We put $a_0=b_0:=0$.

The \emph{Fock space} $F$ is the free $\Q[y^{\pm 1/2}]$-module generated by the creation operators 
$a_{-n}, \ b_{-n}$, ($n>0$) acting on the so-called \emph{vacuum vector} $v_{\emptyset}\in F$. 
$F$ is an $\H$-module by requiring that $a_nv_{\emptyset}=b_nv_{\emptyset}=0$ for all $n>0$.
$F$ has therefore a $\Q[y^{\pm 1/2}]$-basis parametrized by pairs of partitions. 

We write partitions as $\mu=(1^{\mu_1}, 2^{\mu_2},\ldots)$, where
$\mu_i$ is the number of times $i$ occurs in $\mu$. We denote by
$\|\mu\|:=\sum_i i\mu_i$ the number partitioned by $\mu$ and $|\mu|=\sum_i \mu_i$ the length of the partition. We denote $\emptyset$ the empty partition of $0$.

For a partition $\mu$,
we define operators
\begin{equation}\label{abmu}a_\mu:=\prod_{i}\frac{(a_i)^{\mu_i}}{\mu_i!},\ a_{-\mu}:=\prod_{i}\frac{(a_{-i})^{\mu_i}}{\mu_i!},\ b_\mu:=\prod_{i}\frac{(b_i)^{\mu_i}}{\mu_i!},\quad b_{-\mu}:=\prod_{i}\frac{(b_{-i})^{\mu_i}}{\mu_i!}\in \H.\end{equation}
In particular $a_{\emptyset}=a_{-\emptyset}=1$, and similar for the $b_{\mu}$.
For partitions $\mu$, $\nu$ we define 
$v_{\mu,\nu}:=a_{-\mu}b_{-\nu}v_\emptyset$.
Then the $v_{\mu,\nu}$ form a $\Q[y^{\pm1/2}]$-basis of $F$.
A $\Q[y^{\pm1/2}]$-bilinear inner product $\< \, |\, \>$ is defined by
$\< v_\emptyset |v_\emptyset \>:=1$ and the condition that $a_n$ is adjoint to 
$a_{-n}$ and $b_n$ to $b_{-n}$. 
Explicitely this gives
\begin{equation}
\label{innprod}\<v_{\mu,\nu}|v_{\mu',\nu'}\>=\left(\prod_{i}\frac{([i]_y)^{\mu_i}}{\mu_i!}\right)\left(\prod_{j}\frac{([j]_y)^{\nu_j}}{\nu_j!} \right) \delta_{\mu,\nu'}\delta_{\nu,\mu'}.
\end{equation}
We also write $\<\alpha|A|\beta\>:=\<\alpha|A\beta\>$ for $\alpha,\beta\in F$, $A\in \H$.
If $A=\sum_n A_n t^n$ is an operator in $\H[[t]]$, we write 
$\<v_{\mu,\nu}|A| v_{\mu',\nu'}\>$ for the matrix element $\sum_{n} \<v_{\mu,\nu}|A_n| v_{\mu',\nu'}\>t^n.$
We write $\<A\>$ for the \emph{vacuum expectation value}
$\<v_\emptyset|A|v_\emptyset\>$.

\subsection{Refined Severi degrees}
\label{subsecRefinedSeveri}

We briefly review the definition of the refined Severi degrees form
\cite{BG14}, more details can be found there.

A \emph{lattice polygon} polygon $\Delta\subset \RR^2$ is a polygon
with vertices of
 integer coordiates.
The \emph{lattice length} of an edge $e$ of $\Delta$ is $\# e\cap \Z^2-1$. 
$\Delta$ is a closed subset of $\RR^2$. We denote by $int(\Delta)$, $\partial(\Delta)$ its interior and its boundary.
To a convex lattice polygon $\Delta$ one can associate a pair $S(\Delta)$, $L(\Delta)$ of a toric surface and 
a toric line bundle on $S(\Delta)$.  The toric surface is defined by the fan given by the outer normal vectors of $\Delta$. We have
$\dim H^0(S(\Delta),L(\Delta))=\# (\Delta\cap \Z^2)$. The arithmetic genus of a curve in $|L(\Delta)|$ is 
$g(\Delta)=\# (int(\Delta)\cap \Z^2)$.

Let $\Delta$ be a lattice polygon in $\RR^2$. A
non-zero vector $u \in \ZZ^2$ is \emph{primitive} if its entries are coprime.

\begin{definition}
A \emph{tropical curve of degree $\Delta$} is a continuous map $h: C \to
\RR^2$ satisfying:
\begin{enumerate}
\item$C$ is a abstract tropical curve (essentially a metric graph),
  possibly with multiple components.
\item $h(C)$ is a one-dimensional polyhedral complex with edges of
  rational slope, and non-negative integer weights $w(e)$ on all
  edges $e$, such that each vertex $V$ of $C$ is balanced, that is
\[
\sum_{e: \, V \in \partial e} w(e) \cdot v(V, e) = 0,
\]
where $v(V, e) \in \ZZ^2$ is the primitive vector starting at $V$ in
direction $e$.
\item For each primitive vector $u \in \ZZ^2$, the total weight
  of the unbounded edges in direction $u$ equals the lattice length of an edge of
  $\partial \Delta$ with outer normal vector $u$ (if there is no such edge, we require the total weight to be zero).
\end{enumerate}
\end{definition}

A tropical curve $(C, h)$ defines a \emph{dual subdivision}
of $\Delta$. (The edges of this subdivision are orthogonal to the
edges $e$ of $h(C)$ and have lattice length $w(e)$.) Each $3$-valent vertex $v$ of $h(C)$ corresponds to a
triangle $\Delta_v$ of the dual subdivision. 
We say that $(C, h)$ is \emph{simple} if all vertices of
  $C$ are $3$-valent, the self-intersections of $h$ are disjoint
  from vertices, and the inverse image under $h$ of self-intersection
  points consists of exactly two points of $C$.
 The \emph{number of nodes} of $(C, h)$ is the number of
  parallelograms of the dual subdivision if $(C, h)$ is
  simple. The lattice area $\Area(-)$ of a lattice polygon is twice
  its Euclidian area.

The \emph{refined multiplicity} of a simple
tropical curve $(C, h)$ is
\begin{equation}
\label{eqn:refinedMultiplicity}
\mult(C, h;y) = \prod_v  [\Area(\Delta_v)]_y,
\end{equation}
the product running over the $3$-valent vertices of $(C, h)$.

We now define the tropical refinement of Severi degrees.
We require the configuration of
tropical points to be in \emph{tropically generic position}; the
precise definition is given in \cite[Definition~4.7]{Mi05}. Roughly,
tropically generic means there are no tropical curves of unexpectedly small degree passing through
the points.

\begin{definition}
The \emph{refined
  Severi degree} $N^{\Delta, \delta}(y)$ 
is
\begin{equation}
\label{eqn:RefinedSeveriSum}
N^{\Delta, \delta}(y) := \sum_{(C, h)} \mult(C, h; y),
\end{equation}
where the sum is over all $\delta$-nodal tropical curves $(C, h)$
of degree $\Delta$
passing through $|\Delta \cap \ZZ^2| - 1 - \delta$ tropically generic points.
\end{definition}

By \cite[Theorem~1]{IM12} and \cite[Theorem~7.3]{BG14}, $N^{\Delta,
  \delta}(y)$ is independent of the generic point configuration.

There is a relative notion of refined Severi degrees if $\Delta$ has
at least one horizontal edge as is the case for $\PP^2$, $\Sigma_m$,
and $\PP(1,1,m)$. Again, we are brief, see Definitions 7.1 and 7.2
of \cite{BG14} for details. We now assume that $\Delta$ has a horizontal edge at
the bottom (i.e., with outer normal $(0,-1)$) of lattice length $d^b$. By a {\it sequence} we mean a collection $\alpha=(\alpha_1,\alpha_2,\ldots)$ of nonnegative integers, almost all of which are zero. For two sequences
$\alpha$, $\beta$ we define $\alpha!:=\prod_{i}\alpha_1!$,
$|\alpha|=\sum_i\alpha_i$, $\|\alpha\|=\sum_i i\alpha_i$, 
and
$\alpha+\beta=(\alpha_1+\beta_1,\alpha_2+\beta_2,\ldots)$.
Note that a sequence $\alpha=(\alpha_1,\alpha_2,\ldots)$ can be identified with the corresponding partition $(1^{\alpha_1},2^{\alpha_2},\ldots)$ and with this identification
the definitions of $\|\alpha\|$ and $|\alpha|$ correspond. 

Let $\alpha$ and $\beta$ be two sequences with $\|\alpha\| + \|\beta\| =
d^b$, let $D$ be a horizontal line very far below,
and let $\Pi$ be a tropically generic
point configuration of $|\Delta \cap \ZZ^2| -1 -\delta -\|\alpha\|-\|\beta\| +
|\alpha| + |\beta|$ points with precisely $|\alpha|$ points on
$D$.
A tropical curve $C$ passing through $\Pi$ is
  \emph{$(\alpha, \beta)$-tangent to $D$} if precisely $\alpha_i+\beta_i$ 
unbounded edges of $C$ are orthogonal to and intersect $D$ and have multiplicity
$i$ and, further,
$\alpha_i$ of the edges pass through $\Pi \cap D$.

\begin{definition}
The \emph{refined relative
  Severi degree} $N^{\Delta, \delta}(\alpha, \beta)(y)$ 
is the number of $\delta$-nodal
tropical curves $C$ of degree $\Delta$ passing through $\Pi$ that
are $(\alpha, \beta)$-tangent to $D$, counted with multiplicity
\[
\mult_{\alpha,\beta}(C;y) = \frac{1}{\prod_{i \ge 1}
  ([i]_y)^{\alpha_i}} \cdot \mult(C;y).
\]
\end{definition}

\subsection{$h$-transverse lattice polygons}
\label{sec:h-transverse_polygons}

We recall the definition of $h$-transverse lattice polygons from \cite{AB10}, with slightly different notations.
\begin{definition}
For us a {\it multiset} of integers is a tuple $(r_1^{i_1},\ldots,r_s^{i_s})$, where the $r_j$ are an ascending sequence of integers and 
the $i_j$ are positive integers. Thus they are the same as the maps
$\Z\to \Z_{\ge 0}$ 
with finite support.
For a multiset $r=(r_1^{i_1},\ldots,r_s^{i_s})$, we put $|r|=\sum_{j=1}^s i_j$ and $\|r\|=\sum_{j=1}^s i_j r_j.$
An \emph{ordering} of $r$ is a sequence $(a_1,a_2,\ldots,a_{|r|})$ of integers, with $i_j=\#\big\{ k\bigm| a_k=r_j\big\}$ for all $j$.
For instance the orderings of $(1^2,2)$ are $(1,1,2)$, $(1,2,1)$, $(2,1,1)$.
\end{definition}

\begin{definition}
A convex lattice polygon $\Delta\subset \RR^2$ is called $h$-transverse, if every edge has slope $0$, $\infty$ or $1/k$ for some integer $k$.

Now let $\Delta$ be an $h$-transverse convex lattice polygon.  Let  $d^t_\Delta$ and $d^b_\Delta$ be the lengths of the top and bottom edges of $\Delta$
(and $0$ if they do not exist).  Let  the slopes of outward normal vectors of the edges on the
right hand side
of $\Delta$ be (in ascending order)
$r_1,\ldots,r_s$ and let their lattice lengths be $i_1,\ldots,i_s$. Similarly let the slopes of the outward normal vectors on the left hand side be (in ascending order) $l_1,\ldots,l_t$ and let their lattice lengths be $j_1,\ldots,j_t$.
Then 
$$r_\Delta:=(r_1^{i_1},\ldots, r_s^{i_s}), \quad l_{\Delta}:=(l_1^{j_1},\ldots, l_t^{j_t})$$
  are multisets
with
$d^b_\Delta+\| l_\Delta\| =d^t_\Delta+ \| r_\Delta\|$, and $h_\Delta:=|l_\Delta|=|r_\Delta|$ is the height of $\Delta$.

Conversely, given $d^b, d^t\in \Z_{\ge 0}$, $r,l: \Z\to \Z_{\ge 0}$ with finite support and $| l|=|r|$, such that
$d^t+\| r\|:=d^b+\|l\|$,
 there is an $h$-transverse lattice polygon
$\Delta$ such that $d^b=d^b_\Delta$, $d^t=d^t_\Delta$, $l=l_\Delta$, $r=r_\Delta$.
\end{definition}

\begin{example}\label{htrex}
We list some examples of toric surfaces and line bundles corresponding to $h$-transverse polygons.
\begin{enumerate}
\item For $S=\PP^2$, $L=\O(d)$, we have 
$d^t_\Delta=0$, $d_\Delta^b=d$, $l_\Delta=(0^d)$ and $r_\Delta=(1^d)$.
\item For $S=\Sigma_m$ a rational ruled surface, let $F$ be the class of a fibre of the ruling; let $E$ be the section with self intersection $-m$; Let $H=E+mF$. 
For $L=dH+cF$ we have 
$d^t_\Delta=c$, $d^b=c+dm$, $l_\Delta=(0^d)$, $r_\Delta=(m^d)$.
\item 
For weighted projective space $\PP(1,1,m)$, and $L=dH$ with $H$ the hyperplane bundle with $H^2=m$, we have
$d^t_{\Delta}=0$, $d^b_\Delta=dm$, $l_\Delta=(0^d)$ and $r_\Delta=(m^d)$. As $dH$ on $\Sigma_m$ is the pullback of the line bundle with the same name on 
$\PP(1,1,m)$,  this in fact is the case $c=0$ of the previous example. 
 \item 
 For a weighted projective space $\PP(1,m-1,m)$ and $L=dH$ where $H$ is the hyperplane bundle with $H^2=m(m-1)$, we have
 $d^t_\Delta=0$, $d^b_\Delta=0$, $l_\Delta=(0^{dm})$, 
 $r_\Delta=((-1)^{d(m-1)},(m-1)^d)$.
\end{enumerate}

\vskip-1cm
\begin{figure}[htbp]
 \begin{tabular}{cccc}
\raisebox{15.5pt}{\includegraphics[scale = 0.067]{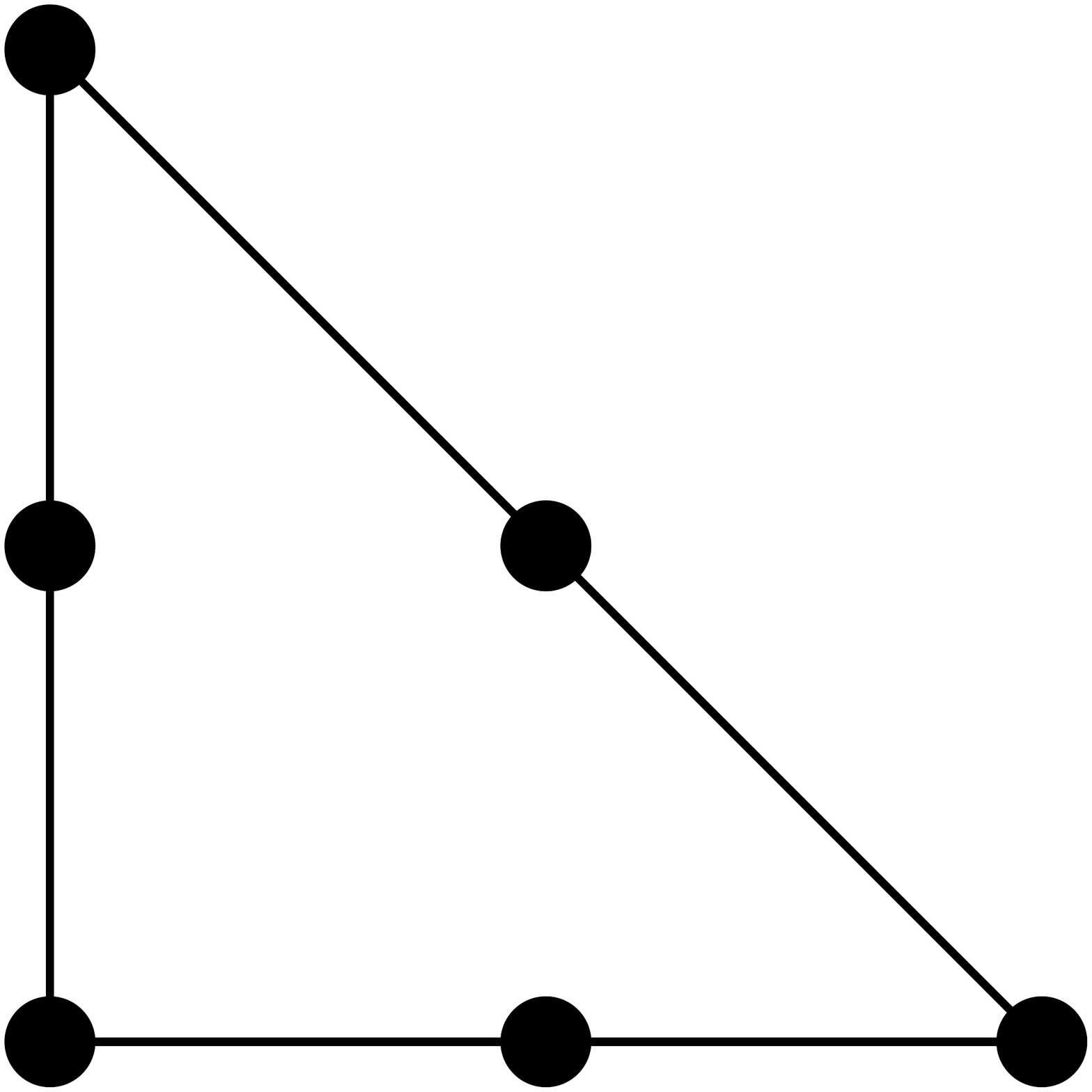}}  \quad
&
\quad 
\raisebox{-35pt}{
\includegraphics[scale = 0.2]{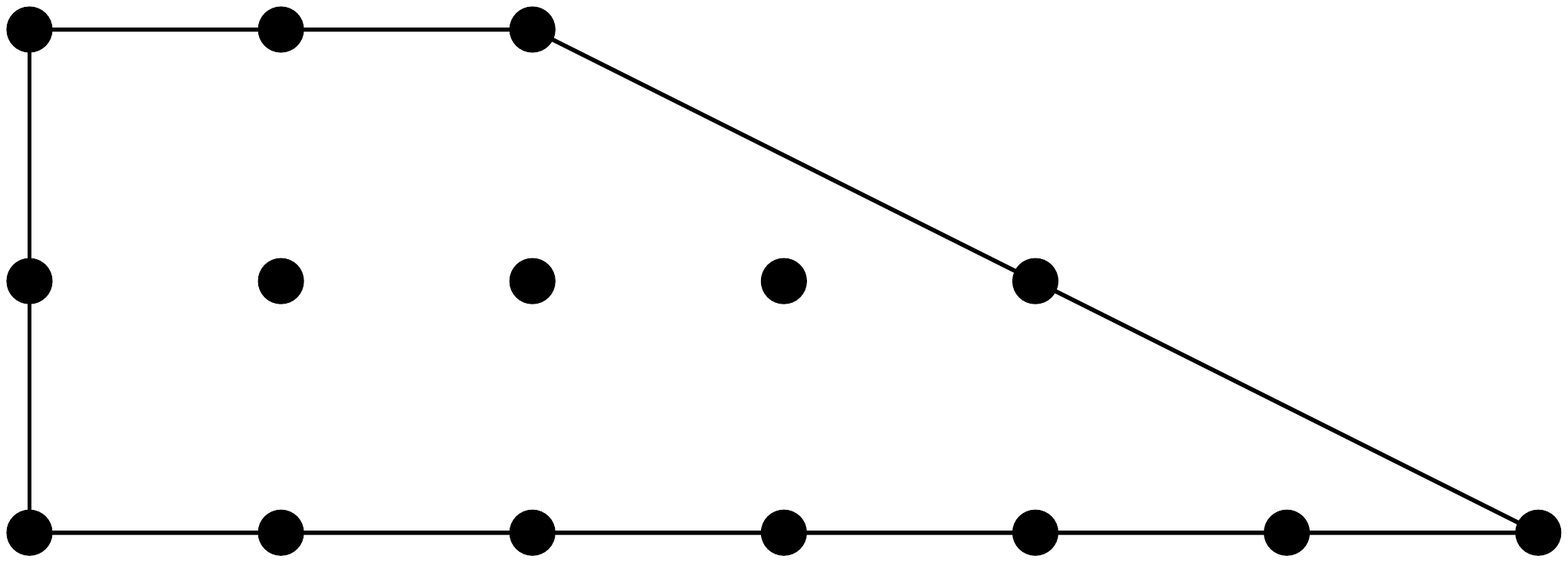}} \quad 
&
\quad 
\raisebox{-6.5pt}{
 \includegraphics[scale = 0.13]{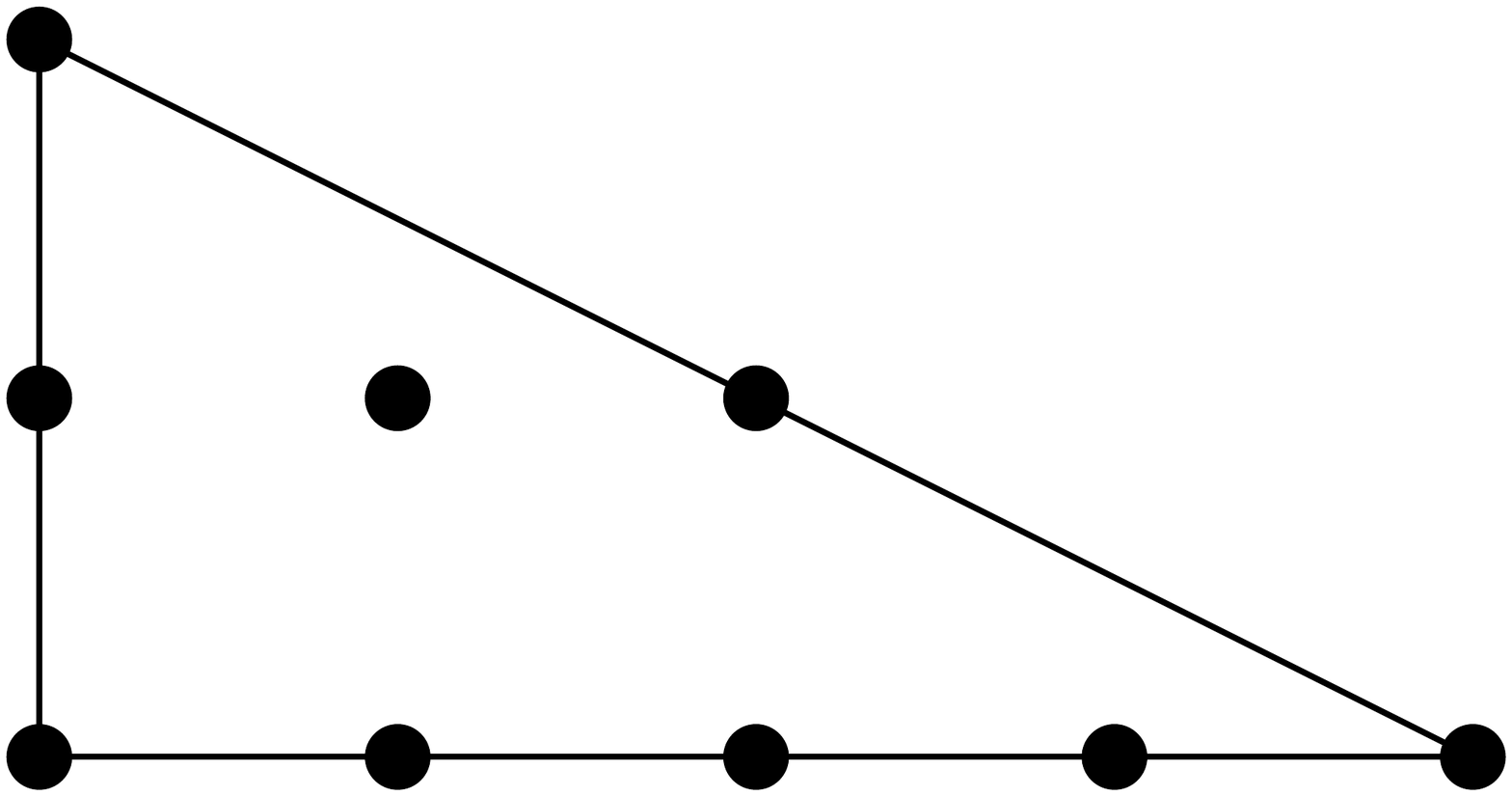}} \quad
&
\quad 
\raisebox{7.5pt}{
 \includegraphics[scale = 0.075]{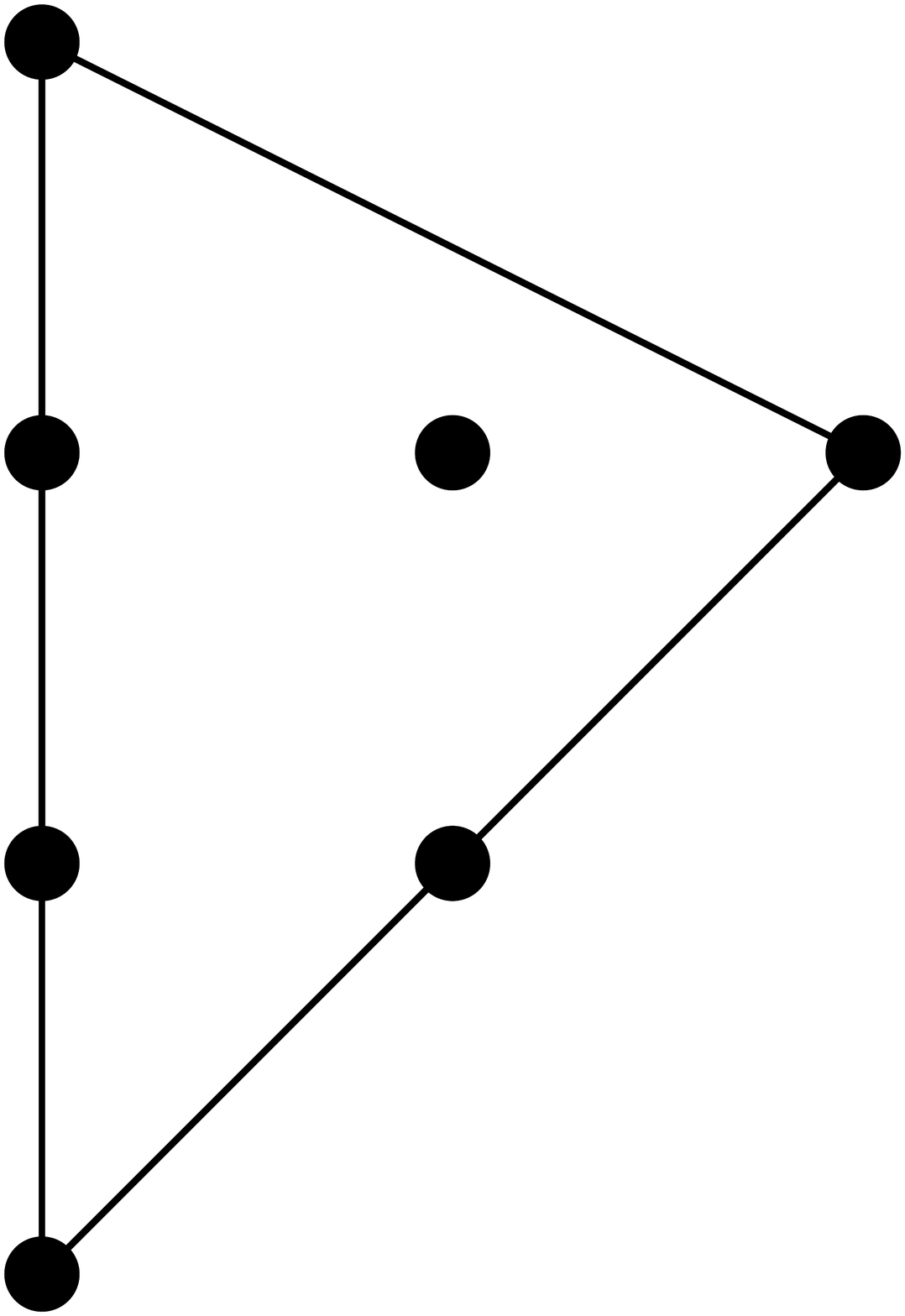}} \quad \\
\vspace{-4em}
 \end{tabular}
\caption{The lattice polygons of $(\PP^2,2H)$, $(\Sigma_2, 2F+2H)$,
  $(\PP(1,1,2),2H)$, and
  $\PP(1,2,3),H)$.}
\label{fig:HirzebruchCurve}
\end{figure}
\end{example}

\section{Main theorems}
\label{main_theorems}
We first state our main result for general $h$-transverse lattice polygons, relating the refined Severi degrees to matrix elements of operators in the Heisenberg algebra $\H$.
The general formula is a bit complicated, but as corollaries we get somewhat more attractive formulas for the surfaces of  \exmpref{htrex}.

\begin{remark}\label{grad}
The Fock space $F$ has a grading $F=\bigoplus_{n\ge 0} F_n$ with $F_n$ the span of the $v_{\mu,\nu}$ with $\|\mu\|+\|\nu\|=n$.
We denote by $\widehat F$ the completion of $F$ with respect to this grading, i.e. elements of $\widehat F$ are 
possibly infinite sums $\sum_{n\in \Z_{\ge 0}} v_n$, with $v_n\in F_n$.

Then $\H$ is  a graded algebra: $\H=\sum_{n\in \Z} \H_n$. This grading is defined by giving degree $n\in \Z$ to $a_{-n}$ and $b_{-n}$. 
It coincides with the degree as operators on $F$: elements of $\H_n$ send $F_m$ to $F_{m+n}$. We denote  $\widehat \H$ the set of linear maps $f:\widehat F\to \widehat F$, which are expressible as possibly infinite sums 
$f=\sum_{n\ge 0} h_n$ with $h_n\in \H$. 
\end{remark}

\begin{notation}
 Let $T_i, \ i\in \Z$, be noncommuting variables (with no relations). 
For a finite sequence $I=(i_1,\ldots,i_n)$ of integers, we put $T^I:=T_{i_1}T_{i_2}\ldots T_{i_n}$.
For a ring $R$, let $R\{T\}$ be the set of finite linear combinations $\sum_I a_I T^I$, with $I$ running through finite sequences of integers 
with coefficientwise addition, and multiplication defined by concatenation: 
$$(T_{i_1}\ldots T_{i_n})(T_{j_1}\ldots T_{j_m})=T_{i_1}\ldots T_{i_n}T_{j_1}\ldots T_{j_m}.$$
For $M=\sum_{I}a_IT^I\in R\{T\}$, with the $I$ distinct, we put $\Coeff_{T^I}[M]=a_I$.

We define an operator on the Fock space $F$ by 
$$H(T):=\sum_{k>0} b_{-k}b_{k}+\sum_{\mu,\nu} a_{-\mu}a_{\nu}T_{\|\nu\|-\|\mu\|}\in \widehat \H\{T\},$$
where $\mu, \nu$ run through all partitions. 
For sequences $I=(i_1,\ldots,i_n),\ J=(j_1,\ldots,j_n)$, as usual 
$I-J=(i_1-j_1,\ldots, i_n-j_n).$
\end{notation}

\begin{notation}
In the future we will write $\#\Delta$ instead of $\#(\Delta\cap \Z^2)$ for the lattice points in a convex lattice polygon.
Below we will usuall write $d^b$ and $d^t$ instead of $d^b_\Delta$, $d^t_\Delta$, when this does not lead to confusion.
\end{notation}

\begin{notation}
We write $I_y^{\alpha+\beta}=\prod_{i}
 [i]_y^{\alpha_i+\beta_i}$ for finite sequences $\alpha$ and
 $\beta$. 
\end{notation}

The main theorem of this paper describes the refined Severi degrees of $h$-transversal lattice polygons in terms of matrix elements of operators on Fock space.
\begin{theorem}\label{NDEL}
Let $\Delta$ be an $h$-transverse lattice polygon. 
\begin{eqnarray}
\label{NDEL1}N^{\Delta,\delta}(y)&=&\left\< v_{(1^{d^t}),\emptyset}\left| \sum_{R,L} \Coeff_{T^{R-L}}\left[
H(T)^{\#\Delta-\delta-1}\right]\right|v_{(1^{d^b}),\emptyset}\right\>,\\
\label{NDEL2}\qquad N^{\Delta,\delta}(\alpha,\beta)(y)&=&\frac{\alpha!}{I_y^{\alpha+\beta}}\left\< v_{(1^{d^t}),\emptyset}\left| \sum_{R,L} \Coeff_{T^{R-L}}\left[
H(T)^{\#\Delta-\delta-1-d^b+|\beta|}\right]\right|v_{\beta,\alpha}\right\>.
\end{eqnarray}
Here $R$ and $L$ run through the orderings of $r_{\Delta}$ and $l_{\Delta}$ respectively, and in the second formula $\alpha$, $\beta$ are partitions with 
$\|\alpha\|+\|\beta\|=d^b.$
\end{theorem}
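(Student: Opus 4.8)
The plan is to prove both formulas at once by realizing the matrix elements as generating functions of \emph{marked floor diagrams}, which in turn compute the (relative) refined Severi degrees. I would proceed in four stages.

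First I would reduce the geometry to combinatorics. By the tropical definition of $N^{\Delta,\delta}(y)$ together with the floor decomposition of tropical curves available for $h$-transverse polygons (following Brugall\'e--Mikhalkin, \cite{BM2}), the refined Severi degree is a weighted count of marked floor diagrams, the tropical refined multiplicity $\mult(C,h;y)=\prod_v[\Area(\Delta_v)]_y$ turning into a floor-diagram multiplicity that is again a product of quantum numbers. This step, carried out in Section~\ref{sec:floor_diagrams}, lets me forget the embedded tropical curve and work with the purely combinatorial floor diagram, recording only floors, their incoming and outgoing elevators with weights, and the markings. The $h$-transversality enters precisely here: the slopes of the left and right edges of $\Delta$, encoded by the multisets $l_\Delta$ and $r_\Delta$, govern the horizontal displacement of successive floors, and an ordering $(R,L)$ of $(r_\Delta,l_\Delta)$ records the order in which these slopes occur from bottom to top.

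Second I would set up the operator--diagram dictionary. Expanding $H(T)^{\#\Delta-\delta-1}$ into a sum of products of the two monomial types, I read each factor as one horizontal slice of a floor diagram: a \emph{floor} monomial $a_{-\mu}a_{\nu}T_{\|\nu\|-\|\mu\|}$ is a floor with incoming elevators of weights $\nu$ and outgoing elevators of weights $\mu$, while a \emph{pass-through} monomial $b_{-k}b_{k}$ is a segment of an elevator of weight $k$ crossing a slice carrying no floor. The operator $\Coeff_{T^{R-L}}$ selects exactly those products whose floor monomials realize, in order, the displacement word $R-L$, and summing over orderings $R,L$ accounts for all admissible boundary behaviours on the two sides of $\Delta$. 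The in-state $v_{(1^{d^b}),\emptyset}$ supplies the $d^b$ weight-one boundary elevators along the bottom edge, and the out-state $v_{(1^{d^t}),\emptyset}$ absorbs the $d^t$ boundary elevators along the top edge.

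Third, and this is the technical heart, I would prove a Wick-type theorem for the $y$-deformed algebra $\H$ and match weights. Since the $a$'s and $b$'s commute among themselves and $[a_n,b_m]=[n]_y\delta_{n,-m}$, every nonzero contraction pairs an annihilation operator with a creation operator of the \emph{opposite} species and contributes a factor $[k]_y$; tracking the state then shows that floors emit $a$-elevators, a pass-through sends an $a$-elevator of weight $k$ to the corresponding $b$-elevator (up to a factor $[k]_y$, since $\sum_j b_{-j}b_j\,a_{-k}v_\emptyset=[k]_y\,b_{-k}v_\emptyset$), and a floor may then absorb a $b$-elevator. The complete contractions of a fixed monomial product are thereby in bijection with the ways of joining outgoing to incoming elevators into a marked floor diagram, that is, with its Feynman diagrams. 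I would then verify that the product of contraction values, combined with the symmetry factors $1/\mu_i!,\ 1/\nu_j!$ built into $a_{\pm\mu}$ and the normalizations from the inner product \eqref{innprod}, reproduces exactly the refined multiplicity attached to that floor diagram. Summing over all monomial products and all complete contractions yields $N^{\Delta,\delta}(y)$, proving \eqref{NDEL1}.

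Finally, the relative formula \eqref{NDEL2} follows by the same mechanism with the bottom boundary refined: the in-state $v_{\beta,\alpha}=a_{-\beta}b_{-\alpha}v_\emptyset$ encodes the tangency profile, the $b_{-\alpha}$ part entering elevators already in $b$-form and hence absorbable without a pass-through, which accounts for the shift of the exponent by $-d^b+|\beta|$; the prefactor $\alpha!/I_y^{\alpha+\beta}$ supplies the relative normalization $1/\prod_i[i]_y^{\alpha_i}$ together with the inner-product normalization of the boundary state and the bookkeeping for the fixed points on $D$. I expect the main obstacle to be precisely the weight-matching of the third stage: showing that the $y$-deformed contraction factors and all combinatorial symmetry factors assemble into $[\Area(\Delta_v)]_y$ for each vertex, and that every floor diagram is counted exactly once, requires a careful analysis of how each dual triangle's area decomposes into the weights of its incident elevators.
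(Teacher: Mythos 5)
Your proposal follows essentially the same route as the paper: reduce the refined (relative) Severi degrees to counts of marked floor diagrams via the floor decomposition of \cite{AB10}, \cite{BM2}, set up the dictionary between factors of $H(T)^N$ and slices of a floor diagram, and prove a $y$-deformed Wick theorem identifying complete contractions with (extended $(\alpha,\beta)$-marked) floor diagrams, with the prefactor $\alpha!/I_y^{\alpha+\beta}$ absorbing the relative normalization. The only caveats are cosmetic: your incoming/outgoing convention for $a_{-\mu}a_\nu$ is reversed relative to the paper's, and the weight-matching you flag as the main obstacle is already discharged at the floor-diagram level by the cited results, so the Wick step only needs to match $\prod_e[w(e)]_y$ edge by edge rather than reassemble $[\Area(\Delta_v)]_y$.
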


If $\Delta$ has only one left direction, i.e. we can write $l_\Delta=(l^{h_{\Delta}})$ for some $l\in \Z$, the formula simplifies, and we do not need to use noncommutative variables anymore. 
Let $(t_i)_{i\in \Z}$ be commuting variables.
For a multiset  $I=(l_1^{i_1},\ldots,l_s^{i_s})$ of integers, we put $\mathbf t^I:=t_{l_1}^{i_1}\ldots t_{l_s}^{i_s}$. For a ring $R$ we write $R[\mathbf t]$  for the ring of polynomials series in the $(t_i)_{i\in \Z}.$
We define an operator on the Fock space $F$ by 
$$H(\mathbf t):=\sum_{k>0} b_{-k}b_{k}+\sum_{\mu,\nu} a_{-\mu}a_{\nu}t_{\|\nu\|-\|\mu\|}\in \widehat \H[\mathbf t],$$
where $\mu, \nu$ run through all partitions. 
If $r_{\Delta}=(r_1^{n_1},\ldots,r_s^{n_s})$, we write $(r_{\Delta}-l):=((r_1-l)^{n_1},\ldots,(r_s-l)^{n_s})$

\begin{corollary}\label{oneleft}
Let $\Delta$ be an $h$-transverse lattice polygon with $l_{\Delta}=(l^{h_{\Delta}})$.
Then 
$$
N^{\Delta,\delta}(y)=\left\< v_{(1^{d^t}),\emptyset}\left| \Coeff_{\mathbf t^{(r_\Delta-l)}}\left[
H(\mathbf t)^{\#\Delta-\delta-1}\right]\right|v_{(1^{d^b}),\emptyset}\right\>.$$
$$N^{\Delta,\delta}(\alpha,\beta)(y)=\frac{\alpha!}{I_y^{\alpha+\beta}}\left\< v_{(1^{d^t}),\emptyset}\left|  \Coeff_{\mathbf t^{(r_{\Delta}-l)}}\left[
H(\mathbf t)^{\#\Delta-\delta-1-d^b+|\beta|}\right]\right|v_{\beta,\alpha}\right\>,$$
where $\alpha$, $\beta$ are partitions with $\|\alpha\|+\|\beta\|=d^b.$
\end{corollary}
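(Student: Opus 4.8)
The plan is to deduce Corollary~\ref{oneleft} directly from Theorem~\ref{NDEL} by specializing the noncommutative setup $R\{T\}$ to the commutative one $R[\mathbf t]$. The key observation is that the commutator $[a_{-\mu}a_\nu,\, a_{-\mu'}a_{\nu'}]$ that governs the evaluation of matrix elements of $H(T)$ contributes nothing coming from the left indices $\mu,\mu'$ acting among themselves (the creation operators $a_{-i}$ all commute by \eqref{Heiscomm}), so all the noncommutativity in the bookkeeping variable $T$ is forced by the \emph{order} in which the right-edge slopes are contracted against left-edge slopes via the pairing $[a_n,b_m]=[n]_y\delta_{n,-m}$. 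When $l_\Delta=(l^{h_\Delta})$ has a single repeated value, every ordering $L$ of $l_\Delta$ is the same constant sequence $(l,l,\ldots,l)$, so the sum $\sum_{R,L}$ over orderings collapses: the left ordering is unique, and summing $\sum_R \Coeff_{T^{R-L}}$ over the orderings $R$ of $r_\Delta$ with a fixed constant shift by $l$ is exactly what the single commutative coefficient $\Coeff_{\mathbf t^{(r_\Delta-l)}}$ records.

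\textbf{First} I would set up the comparison homomorphism. Define $\phi\colon R\{T\}\to R[\mathbf t]$ on generators by $\phi(T_i)=t_{i}$ and extend multiplicatively; since $R[\mathbf t]$ is commutative, $\phi$ sends $T^I=T_{i_1}\cdots T_{i_n}$ to the monomial $\mathbf t^{I}$ depending only on the multiset underlying $I$. Applying $\phi$ coefficientwise to $H(T)$ yields exactly $H(\mathbf t)$, because the operator coefficients $a_{-\mu}a_\nu$ are untouched and only the formal variables are identified; hence $\phi(H(T)^{N})=H(\mathbf t)^N$ for every power $N$. \textbf{Next} I would track how coefficients transform: for a fixed multiset $I$, the commutative coefficient of $\mathbf t^I$ in $\phi(M)$ is the sum of the noncommutative coefficients $\Coeff_{T^{I'}}[M]$ over all orderings $I'$ of $I$. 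Taking $M=H(T)^{\#\Delta-\delta-1}$ and $I$ the multiset $(r_\Delta-l)$, this is precisely $\sum_R \Coeff_{T^{R-(l,\ldots,l)}}[M]$, since subtracting the constant ordering $L=(l,\ldots,l)$ of $l_\Delta$ shifts every entry of an ordering $R$ of $r_\Delta$ down by $l$ and the orderings of $r_\Delta-l$ correspond bijectively to orderings $R$ of $r_\Delta$.

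\textbf{Then} I would match the matrix-element data. In Theorem~\ref{NDEL} the ket in the single-left case becomes $v_{(1^{d^b}),\emptyset}$ (resp.\ $v_{\beta,\alpha}$), and the bra $v_{(1^{d^t}),\emptyset}$ is unchanged; none of these vectors involves $T$ or $\mathbf t$, so the inner product $\langle\,\cdot\,|\,\cdot\,\rangle$ commutes with extracting coefficients and with applying $\phi$. Combining the coefficient identity from the previous step with the collapse $\sum_{R,L}=\sum_R$ (the unique $L$) gives
\[
\sum_{R,L}\Coeff_{T^{R-L}}\bigl[H(T)^{\#\Delta-\delta-1}\bigr]
=\Coeff_{\mathbf t^{(r_\Delta-l)}}\bigl[H(\mathbf t)^{\#\Delta-\delta-1}\bigr],
\]
and substituting into \eqref{NDEL1} yields the first displayed formula of the corollary; the second follows identically, carrying along the prefactor $\alpha!/I_y^{\alpha+\beta}$ and replacing the exponent by $\#\Delta-\delta-1-d^b+|\beta|$ and the ket by $v_{\beta,\alpha}$, none of which interacts with the $T\rightsquigarrow\mathbf t$ passage.

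\textbf{The main obstacle} I expect is verifying carefully that summing the noncommutative coefficients over orderings really reproduces the commutative coefficient without double-counting or sign subtleties. Concretely, one must check that distinct orderings $R$ of $r_\Delta$ give rise to distinct sequences $R-L$ (which holds since $L$ is constant), and that each appears with the correct multiplicity of $1$ in the passage $\phi$, so that the single commutative coefficient is an honest sum over the $R$ rather than a weighted sum. Since the underlying operator coefficients $a_{-\mu}a_\nu$ are identical across orderings and the $T_i\mapsto t_i$ identification is multiplicity-free on a fixed multiset, this reduces to the elementary combinatorial fact that the commutative monomial $\mathbf t^{I}$ collects exactly the orderings of the multiset $I$; the only care needed is to confirm that the range of the sum $\sum_{R,L}$ in Theorem~\ref{NDEL}, restricted to $|l_\Delta|=h_\Delta$ equal copies of $l$, is in bijection with the orderings of $r_\Delta-l$ under the shift, which is immediate once $L$ is seen to be forced.
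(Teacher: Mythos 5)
Your proposal is correct and follows essentially the same route as the paper: define the specialization homomorphism $T_i\mapsto t_i$, observe that the commutative coefficient of $\mathbf t^{(r_\Delta-l)}$ collects the noncommutative coefficients over all orderings, and use that the constant ordering $L=(l,\ldots,l)$ is unique so that $R\mapsto R-L$ bijects orderings of $r_\Delta$ with orderings of $r_\Delta-l$. The paper additionally notes that $\Coeff_{T^R}[H(T)^N]$ depends only on the underlying multiset of $R$, but as you correctly implicitly use, this symmetry is not needed for the coefficient identity, so your argument is complete.
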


In \exmpref{htrex} the assumptions of \corref{oneleft} are fullfilled.

For an integer $m$ let 
\begin{align*}H_m(t)&:= \sum_{k>0} b_{-k}b_k+ t\sum_{\|\mu\|=\|\nu\|-m}
a_{-\mu}a_{\nu}\end{align*}

\begin{corollary}
\begin{enumerate}\label{hexcor}
\item For $\PP^2$ we have
\begin{align*}N^{d,\delta}(y)&=\left\<v_{\emptyset} \left| \Coeff_{t^{d}}\left[H_1(t)^{d(d+3)/2-\delta}\right]\right| v_{(1^d),\emptyset}\right\>,\\
N^{d,\delta}(\alpha,\beta)(y)&=\frac{\alpha!}{I_y^{\alpha+\beta}}\left\<v_{\emptyset} \left| \Coeff_{t^{d}}\left[H_1(t))^{d(d+3)/2-\delta-d+|\beta|}\right]\right| v_{\beta,\alpha}\right\>,\\
\end{align*}
for $\alpha,\beta$ partitions with $\|\alpha\|+\|\beta\|=d$.
\item In the case of rational ruled surfaces $\Sigma_m$ and the weighted projective space 
$\PP(1,1,m)$, we get 
 \begin{align*}
N^{(\Sigma_m,cF+dH),\delta}(y)&=
\left\< v_{(1^c),\emptyset}\left| \Coeff_{t^d}\left[H_m(t)^{\binom{d+1}{2}m+cd+c+d-\delta}\right]\right|  v_{(1^{dm+c}),\emptyset}\right\>,\\
N^{(\Sigma_m,cF+dH),\delta}(\alpha,\beta)(y)&=\frac{\alpha!}{I_y^{\alpha+\beta}}
\left\< v_{(1^c),\emptyset}\left| \Coeff_{t^d}\left[H_m(t)^{\binom{d+1}{2}m+cd+c+d-\delta-\|\alpha\|-\|\beta\|+|\beta|}\right]\right|  v_{\beta,\alpha}\right\>,\\
N^{(\PP(1,1,m),dH),\delta}(y)&=\left\< v_\emptyset\left|\Coeff_{t^d}\left[H_m(t)^{\frac{m}{2}d^2+(\frac{m}{2}+1)d-\delta}\right]\right|  v_{(1^{dm}),\emptyset}\right\>,\\
N^{(\PP(1,1,m),dH),\delta}(\alpha,\beta)(y)&=\frac{\alpha!}{I_y^{\alpha+\beta}}\left\< v_\emptyset \left|\Coeff_{t^d}\left[H_m(t)^{\frac{m}{2}d^2+(\frac{m}{2}+1)d-\delta-\|\alpha\|-\|\beta\|+|\beta|}\right]\right|  v_{\beta,\alpha}\right\>.
\end{align*}
\item
Let 
$$G_m(t):=\sum_{k>0} b_{-k}b_k +\sum_{\|\mu\|=\|\nu\|+1}
a_{-\mu}a_{\nu}+t\sum_{\|\mu\|=\|\nu\|-m}
a_{-\mu}a_{\nu}.$$
Then for the surface $\PP(1,(m-1),m)$ we get 
\begin{align*}
N^{(\PP(1,(m-1),m),dH),\delta}(y)&=\left\< \Coeff_{t^d}\left[G_{m-1}(t)^{\binom{m}{2}d^2+md-\delta}\right]\right\>.
\end{align*}

\end{enumerate}
\end{corollary}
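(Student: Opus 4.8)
\emph{Proof strategy.} All four surfaces of \exmpref{htrex} have a single left direction, $l_\Delta=(0^{h_\Delta})$ (so $l=0$), and hence \corref{oneleft} applies in every case. The plan is to substitute the explicit data of \exmpref{htrex} into \corref{oneleft}, to compute the lattice point count $\#\Delta$ for each surface, and to simplify; the only point requiring an extra idea is the appearance of two right directions for $\PP(1,(m-1),m)$.

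For $\PP^2$, $\Sigma_m$ and $\PP(1,1,m)$ the right direction is also unique, $r_\Delta=(r^{h_\Delta})$ with $r=1$ for $\PP^2$ and $r=m$ otherwise, so $\mathbf t^{(r_\Delta-l)}=t_r^{h_\Delta}$ is a power of one variable. Since the $b$-factors carry no $t$ and the only $a$-factors contributing to $\Coeff_{t_r^{h_\Delta}}$ are those with $\|\nu\|-\|\mu\|=r$, one has the operator identity $\Coeff_{t_r^{h_\Delta}}\left[H(\mathbf t)^N\right]=\Coeff_{t^{h_\Delta}}\left[H_r(t)^N\right]$ for every $N$, i.e.\ $H(\mathbf t)$ collapses to $H_1(t)$ resp.\ $H_m(t)$. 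It then remains to evaluate $\#\Delta$: for $\PP^2$ we get $\#\Delta=\binom{d+2}{2}$, so $\#\Delta-\delta-1=d(d+3)/2-\delta$ with $d^t=0$, $d^b=d$; for $\Sigma_m$ the trapezoid of \exmpref{htrex} (equivalently Riemann--Roch for $cF+dH$) gives $\#\Delta=\binom{d+1}{2}m+cd+c+d+1$ with $d^t=c$, $d^b=c+dm$, and $\PP(1,1,m)$ is the specialization $c=0$. Feeding these into \corref{oneleft} produces the absolute formulas, and the relative formulas follow identically from the exponent $\#\Delta-\delta-1-d^b+|\beta|$ of \corref{oneleft} upon rewriting $-d^b=-\|\alpha\|-\|\beta\|$ by means of $\|\alpha\|+\|\beta\|=d^b$.

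For $\PP(1,(m-1),m)$ we have $d^t=d^b=0$, so both boundary vectors are $v_\emptyset$ and \corref{oneleft} yields a pure vacuum expectation value; but now $r_\Delta-l=\big((-1)^{d(m-1)},(m-1)^d\big)$ has two slopes, giving the bivariate coefficient $\Coeff_{t_{-1}^{d(m-1)}t_{m-1}^{d}}$. The operator $G_{m-1}(t)$ is precisely $H(\mathbf t)$ with $t_{-1}$ set equal to $1$, $t_{m-1}$ renamed $t$, and all remaining $t_i$ set to $0$: its middle sum $\sum_{\|\mu\|=\|\nu\|+1}a_{-\mu}a_\nu$ is the slope $-1$ part and its last sum the slope $m-1$ part. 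Setting $t_{-1}=1$ loses no information inside a vacuum expectation value: since $\langle\,\cdot\,\rangle$ kills every operator of nonzero degree, a monomial of $G_{m-1}(t)^N$ selected by $\Coeff_{t^d}$ has $d$ factors from the last (degree $1-m$) sum and $p$ factors from the middle (degree $+1$) sum, hence degree $p-d(m-1)$, which vanishes exactly when $p=d(m-1)$; thus the surviving monomials already carry $t_{-1}$ to the power $d(m-1)$ and
$$\left\langle\Coeff_{t^d}\left[G_{m-1}(t)^N\right]\right\rangle=\left\langle\Coeff_{t_{-1}^{d(m-1)}t_{m-1}^{d}}\left[H(\mathbf t)^N\right]\right\rangle.$$
Finally the polygon of \exmpref{htrex}(4) is the triangle with vertices $(0,0)$, $(0,dm)$, $(d(m-1),d(m-1))$; its lattice area equals $(dH)^2=d^2m(m-1)$ and it has $2dm$ boundary lattice points, so Pick's theorem gives $\#\Delta=\binom{m}{2}d^2+dm+1$ and therefore $N=\#\Delta-\delta-1=\binom{m}{2}d^2+dm-\delta$, proving part (3).

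I expect the collapse in part (3) to be the main obstacle: one must match the degree-zero constraint imposed by the vacuum expectation value with the automatic determination of the $t_{-1}$-exponent, so that the bivariate extraction supplied by \corref{oneleft} genuinely reduces to the univariate $\Coeff_{t^d}$ built into $G_{m-1}(t)$.
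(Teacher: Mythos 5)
Your proposal is correct and follows essentially the same route as the paper: apply \corref{oneleft}, specialize the variables $t_i$ to collapse $H(\mathbf t)$ to $H_1(t)$, $H_m(t)$, resp.\ $G_{m-1}(s,t)$, compute $\#\Delta$, and for $\PP(1,m-1,m)$ use exactly the paper's degree argument (the $s$-sum has degree $+1$, the $t$-sum degree $-(m-1)$, so the vacuum expectation value forces the $t_{-1}$-exponent to be $d(m-1)$ and one may set $t_{-1}=1$). The only cosmetic difference is that you evaluate $\#\Delta$ for $\PP(1,m-1,m)$ via Pick's theorem, whereas the paper splits the triangle into the polygons of $(\PP^2,d(m-1)H)$ and $(\PP(1,1,m-1),dH)$; both give $\#\Delta-1=\binom{m}{2}d^2+md$.
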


\begin{remark} \label{grading}
\begin{enumerate}
\item By definition $\Coeff_{t^d} H_m(t)^N$ has degree $-dm$ with respect to the grading on $F$. Thus $\<v_{\alpha',\beta'}|\Coeff_{t^d}H_m(t)^N|v_{\alpha,\beta}\>=0$ unless 
$\|\alpha\|+\|\beta\|-\|\alpha'\|-\|\beta'\|=dm$.
Therefore the formulas in (1) and (2) (but not (3)) of \corref{hexcor} are also true without taking the coefficient of $t^d$.

\item In the completion $\widehat F$ of $F$, we have the easy identity
$\exp(a_{-1})v_\emptyset=\sum_{m\ge 0} v_{((1^m),\emptyset)}$.
Therefore the argument of  (1) shows also that 
\begin{align*}\left\<v_{(1^c),\emptyset}\left | \Coeff_{t^d}H_m(t)^N\right| v_{(1^{c+dm}),\emptyset}\right\>&=\sum_{n\ge 0}\left\<v_{(1^c),\emptyset}\left | \Coeff_{t^d}H_m(t)^N\right| v_{(1^{n}),\emptyset}\right\>\\
&=\left\<v_{(1^c),\emptyset}\left | \Coeff_{t^d}H_m(t)^N\right| \exp(a_{-1})v_\emptyset \right\>.
\end{align*}

\end{enumerate}
\end{remark}

\begin{corollary} \label{genfun}
One can  organize the above results into generating functions:
\begin{enumerate}
\item  For $\PP^2$ we have
$$\sum_{d\ge 0}\sum_{\delta\ge 0}
\frac{t^d q^{d(d+3)/2-\delta}}{(d(d+3)/2-\delta)!}N^{d,\delta}(y)=\left\<\exp(qH_1(t))\exp(a_{-1})\right\>.$$
\item On $\Sigma_m$ and $\PP(1,1,m)$ we get
\begin{align*}\sum_{c\ge 0}\sum_{d\ge 0}\sum_{\delta\ge 0}
\frac{s^c t^d q^{\binom{d+1}{2}m+cd+c+d-\delta}}{(\binom{d+1}{2}m+cd+c+d-\delta)!}N^{(\Sigma_m,cF+dH),\delta}(y)&=\left\<\exp(a_1 s)\exp(qH_m(t))\exp(a_{-1})\right\>,\\
\sum_{d\ge 0}\sum_{\delta\ge 0}
\frac{t^d q^{\frac{m}{2}d^2+(\frac{m}{2}+1)d-\delta}}{(\frac{m}{2}d^2+(\frac{m}{2}+1)d-\delta)!}N^{(\PP(1,1,m),dH),\delta}(y)&=\left\<\exp(qH_m(t))\exp(a_{-1})\right\>.
\end{align*}
\item On $\PP(1,m-1,m)$ we get
$$\sum_{d\ge 0}\sum_{\delta\ge 0}
\frac{t^d q^{\binom{m}{2}d^2+md-\delta}}{(\binom{m}{2}d^2+md-\delta)!}N^{(\PP(1,(m-1),m),dH),\delta}(y)=\left\<\exp(qG_{m-1}(t))\right\>.$$
\end{enumerate}
\end{corollary}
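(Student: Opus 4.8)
The plan is to show that each of the three generating-function identities is a purely formal repackaging of the per-surface matrix-element formulas of \corref{hexcor}, assembled by means of the exponential series together with the two ``vacuum'' identities of \remref{grading}. Throughout, the asserted equalities are to be read as identities of formal power series in $q$, $s$, $t$, each coefficient of which is a finite sum, so that no question of convergence arises.

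First I would record the two boundary identities. \remref{grading}(2) supplies the ket version $\exp(a_{-1})v_\emptyset=\sum_{e\ge 0}v_{(1^e),\emptyset}$, immediate from $a_{-(1^e)}=(a_{-1})^e/e!$ and hence $v_{(1^e),\emptyset}=(a_{-1})^e v_\emptyset/e!$. Dually, using that $a_1$ is adjoint to $a_{-1}$ and that $(a_{-1})^c v_\emptyset=c!\,v_{(1^c),\emptyset}$, I would derive the bra version $\<v_\emptyset|\exp(s a_1)=\sum_{c\ge 0}s^c\<v_{(1^c),\emptyset}|$. Expanding $\exp(qH_m(t))=\sum_{N\ge 0}\frac{q^N}{N!}H_m(t)^N$ and inserting these identities turns the right-hand side of (1) and (2) into a sum
$$\sum_{c,e,N}\frac{s^c q^N}{N!}\<v_{(1^c),\emptyset}|H_m(t)^N|v_{(1^e),\emptyset}\>,$$
with the index $c$ (respectively $e$) fixed to $0$ when the factor $\exp(s a_1)$ (respectively $\exp(a_{-1})$) is absent. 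This is already the shape of \corref{hexcor}, up to matching the remaining indices.

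The grading of \remref{grad} performs that matching. Since each factor of $t$ in $H_m(t)$ carries degree $-m$, the operator $\Coeff_{t^k}H_m(t)^N$ has degree $-km$, so $\<v_{(1^c),\emptyset}|H_m(t)^N|v_{(1^e),\emptyset}\>$ is concentrated in the single $t$-power $t^{(e-c)/m}$ and vanishes unless $e=c+dm$ for an integer $d\ge 0$. Writing $d=(e-c)/m$ simultaneously collapses the free sum over $e$ to a sum over $d$, identifies the surviving boundary ket with the $v_{(1^{dm+c}),\emptyset}$ of \corref{hexcor} (this is exactly the step recorded in \remref{grading}(2)), and shows that reading off the $t^d$-coefficient is automatic. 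Setting $N=\binom{d+1}{2}m+cd+c+d-\delta$ (respectively $N=d(d+3)/2-\delta$ for $\PP^2$), the exponent of $q$ becomes the exponent appearing in \corref{hexcor}, and reindexing the sum over $N$ by $\delta$ yields the claimed formulas; the $\PP(1,1,m)$ case is the specialization $c=0$. One must only check that the index ranges agree: $\exp(qH_m(t))$ contributes only $N\ge 0$, which matches the fact that $N=\#\Delta-1-\delta$ is the number of point conditions and $N^{\Delta,\delta}(y)$ is accounted for only when this is nonnegative, so no term with negative $q$-exponent is created.

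For part (3) both boundary vectors are already $v_\emptyset$ (as $d^b=d^t=0$), so no boundary identity is needed and $\<\exp(qG_{m-1}(t))\>=\sum_{N\ge 0}\frac{q^N}{N!}\<G_{m-1}(t)^N\>$. Here the two ``moving'' terms of $G_{m-1}(t)$ carry different degrees at the same power of $t$, so, as flagged in \remref{grading}(1), the grading does \emph{not} determine the $t$-power; instead one writes $\<G_{m-1}(t)^N\>=\sum_{d}t^d\<\Coeff_{t^d}[G_{m-1}(t)^N]\>$ and lets the variable $t^d$ of the generating function extract the coefficient, recovering \corref{hexcor}(3) with $N=\binom{m}{2}d^2+md-\delta$. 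I expect the only genuinely delicate points to be the derivation of the bra identity and this careful retention of $\Coeff_{t^d}$ in case (3); the remainder is the bookkeeping of matching exponents and reindexing.
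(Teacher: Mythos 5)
Your proposal is correct and follows essentially the same route as the paper's own proof: both arguments expand the exponentials, invoke the identity $\exp(a_{-1})v_\emptyset=\sum_{m\ge 0}v_{(1^m),\emptyset}$ and the adjointness of $a_1$ and $a_{-1}$ from \remref{grading}, use the grading to drop (or, in case (3), retain) the $\Coeff_{t^d}$, and reindex the sum over $\delta$ by the exponent $n=\#\Delta-1-\delta$. The only difference is cosmetic: you read the computation from the matrix-element side toward the generating function, whereas the paper proceeds in the opposite direction.
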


\thmref{NDEL} will be proven in \secref{Feynman}. In the rest of this section we will deduce \corref{oneleft}, \corref{hexcor}, \corref{genfun} from \thmref{NDEL}.

\begin{proof}[Proof of \corref{oneleft}] 
The map $T_i\mapsto t_i$ gives a homomorphism $\phi:\widehat
\H\{T\}\to \widehat \H[\mathbf t]$.
Clearly  $\phi(H(T)^N)=H(\mathbf t)^N$ for all $N$. 
It is also clear by definition that we can write 
$H(T)^N=\sum_{R} h_R T^R$, where the coefficient $h_R\in \widehat \H$ does not depend on the sequence $R$, but only on the multiset of which $R$ is an ordering. Thus we can write
$$H(T)^N=\sum_r \sum_{R} h_r T^R,\qquad H(\mathbf t)^N=\sum_{r}\Big(\sum_{R} h_r\Big) \mathbf t^r,$$
where the outer sums are over multisets $r$ of integers and the inner sums are over the orderings $R$ of $r$, and the $h_r$ are elements of $\widehat \H$. 
Thus 
\begin{equation}\label{Hcoeff}\sum_R \Coeff_{T^R} [H(T)^N]=\sum_{R} h_r=\Coeff_{\mathbf t^r}[H(\mathbf t)^N], 
\end{equation} where again the sums are over the orderings $R$ of $r$.

We are assuming $l_\Delta=(l^{h_\Delta})$. Thus $l_\Delta$ has only the unique ordering $L:=(l,\ldots,l)$
Thus  $R=(r_1,\ldots r_{h_\Delta})\mapsto R-L:=(r_1-l,\ldots, r_{h_\Delta}-l)$ is a bijection of the orderings of  $r_\Delta$ to those of $(r_\Delta-l)$.
Therefore for any $N$
$$\sum_{R,L}\Coeff_{T^{R-L}}\left[H(T)^N\right]=\sum_{R'}\Coeff_{T^{R'}} \left[H(T)^N\right]=\Coeff_{\mathbf t^{(r_\Delta-l)}}\left[H(\mathbf t)^N\right],$$ where the first sums are over the reorderings of $r_\Delta$ and $l_\Delta$,the second sum is over the reorderings of $(r_\Delta-l)$, and the last identity is by \eqref{Hcoeff}. 
Thus  \corref{oneleft} follows from \thmref{NDEL}.
\end{proof}

\begin{proof}[Proof of \corref{hexcor}] In all the cases we have $l=0$, thus   $(r_{\Delta}-l):=r_{\Delta}=(r_1^{n_1},\ldots,r_s^{n_s})$. Then the right hand side of the formulas of \corref{oneleft} does not change if we set $t_i=0$ whenever $i\not \in \{r_1, \ldots, r_s\}$. 
By \exmpref{htrex} we have the following:

(1) For $S=\PP^2$, $L=dH$, we have $r_{\Delta}=(1^d)$, $d_{\Delta}^t=0$, $d_\Delta^b=d$ and it is easy to see that $\#\Delta-1=d(d+3)/2$. 
Thus the formulas follow at once by setting $t_i:=0$ for $i\ne 1$, $t_1:=t$ in \corref{oneleft}.

(2)  For $S=\Sigma_m$, $L=cF+dH$, we have $r_{\Delta}=(m^d)$, $d_{\Delta}^t=c$, $d_\Delta^b=c+md$ and it is easy to see that $\#\Delta-1=\binom{d+1}{2}m+cd+c+d$. 
Thus the formula follows  by setting $t_i:=0$ for $i\ne m$, $t_m:=t$ in \corref{oneleft}. The formula for $\PP(1,1,m)$ follows by setting $c=0$.

(3)  For $S=\PP(1,m-1,m)$, $L=dH$ we have $r_{\Delta}=((-1)^{(m-1)d},(m-1)^d)$, $d_{\Delta}^t=d_\Delta^b=0$. 
Dividing $\Delta$ horizontally into two triangles, congruent to the polygons  for $(\PP^2,d(m-1)H)$ and for $(\PP(1,1,m-1),dH)$, we compute $\#\Delta-1=\binom{m}{2}d^2+md$. 
Thus setting $t_i:=0$ for $i\not \in  \{-1,(m-1)\}$, $t_{-1}:=s$, $t_{m-1}:=t$ in \corref{oneleft}, we get
\begin{equation} \label{steq} N^{(\PP(1,(m-1),m),dH),\delta}(y)=\left\< \Coeff_{s^{(m-1)d}t^d}\left[G_{m-1}(s,t)^{\binom{m}{2}d^2+md-\delta}\right]\right\>,
\end{equation}
with 
$$G_m(s,t):=\sum_{k>0} b_{-k}b_k +s\sum_{\|\mu\|=\|\nu\|+1}
a_{-\mu}a_{\nu}+t\sum_{\|\mu\|=\|\nu\|-m}
a_{-\mu}a_{\nu}.$$ 
Note however that the coefficient of $s$ of $G_{m-1}(s,t)$ has degree $1$ and the coefficient of $t$ has degree $-(m-1)$. 
Thus $\<\Coeff_{s^nt^d}[G_{m-1}(s,t)]\>=0$ unless $n=(m-1)d$, and thus
 \eqref{steq} remains true if we put $s=1$.
\end{proof}

\begin{proof}[Proof of \corref{genfun}]
(1) By \corref{hexcor} and \remref{grading} we have
$$N^{d,\delta}(y)=\left\<v_\emptyset \left| H_1(t)^{d(d+3)/2-\delta}\right|\exp(a_{-1})v_\emptyset \right\>$$
  for all $\delta\in \Z$, and both sides of this equation vanish if $d(d+3)/2-\delta<0$. Writing $n:=d(d+3)/2-\delta$, we get therefore
\begin{align*}
\sum_{d\ge 0}&\sum_{\delta\ge 0}\frac{t^d q^{d(d+3)/2-\delta}}{(d(d+3)/2-\delta)!}N^{d,\delta}(y)\\&=
\sum_{d\ge 0}t^d\sum_{\delta\ge 0}\left\<v_\emptyset\left|\Coeff_{t^d}\left[\frac{(qH_1(t))^{d(d+3)/2-\delta}}{(d(d+3)/2-\delta)!}\right]\right| \exp(a_{-1})v_{\emptyset}\right\>\\
&=\sum_{n \ge 0}\sum_{d\ge 0}t^d\left\<v_\emptyset\left|\Coeff_{t^d}\left[\frac{(qH_1(t))^{n}}{n!}\right]\right| \exp(a_{-1})v_{\emptyset}\right\>=\<\exp(qH_1(t))\exp(a_{-1})\>.
\end{align*}
(2) is similar. Using again  \corref{hexcor} and \remref{grading} we have in the same way as before
\begin{align*}
&\sum_{c\ge 0}\sum_{d\ge 0}\sum_{\delta\ge 0}
\frac{s^c t^d q^{\binom{d+1}{2}m+cd+c+d-\delta}}{(\binom{d+1}{2}m+cd+c+d-\delta)!}N^{(\Sigma_m,cF+dH),\delta}(y)\\
&=
\sum_{c\ge 0}\sum_{d\ge 0}t^d\sum_{\delta\ge 0}\left \<v_{(1^c),\emptyset}s^c\left| \Coeff_{t^d}\left[\frac{(qH_m(t))^{(\binom{d+1}{2}m+cd+c+d-\delta)}}{(\binom{d+1}{2}m+cd+c+d-\delta)!}\right]
\right| \exp(a_{-1})v_{\emptyset}\right\>\\&=\sum_{c\ge 0}\sum_{n\ge 0}\left \<v_{(1^c),\emptyset}s^c\left| \frac{(qH_m(t))^{n}}{n!}
\right| \exp(a_{-1})v_\emptyset\right\>\\&
=\left<\exp(a_{-1}s)v_\emptyset\left| \exp(qH_m(t)) \right| \exp(a_{-1})v_\emptyset\right\>=\left\< \exp(a_{1}s)\exp(qH_m(t))  \exp(a_{-1})\right\>,
\end{align*}
where in the last step we use that $a_1$ is the adjoint of $a_{-1}$.

(3) By \corref{hexcor}  and using the same arguments as in (1) we get 
\begin{align*}
&\sum_{d\ge 0}\sum_{\delta\ge 0}\frac{t^dq^{\binom{m}{2}d^2+md-\delta}}{(\binom{m}{2}d^2+md-\delta)!}N^{(\PP(1,(m-1),m),dH),\delta}(y)\\
&=\sum_{d\ge 0}t^d\sum_{\delta\ge 0}\left\< \Coeff_{t^d}\left[\frac{(qG_{m-1}(t))^{\binom{m}{2}d^2+md-\delta}}{(\binom{m}{2}d^2+md-\delta)!}\right]\right\>.\\
&=\sum_{d\ge 0}t^d\sum_{n\ge 0}\left\< \Coeff_{t^d}\left[\frac{(qG_{m-1}(t))^{n}}{n!}\right]\right\>=\left\<\exp(qG_{m-1}(t))\right\>.
\end{align*}
\end{proof}

\begin{remark} 
In \cite{GS12}, \cite{BG14} irreducible refined Severi degrees $N_0^{(S,L),\delta}(y)$, $N^{\Delta,\delta}_0(y)$ are introduced and studied.
They give a count of irreducible tropical curves:
$$N^{\Delta,\delta}_0=\sum_{(C,h)} \mult(C,h,y),$$ where the sum is now over {\em irreducible} $\delta$-nodal tropical curves through 
$|\Delta\cap \Z|-1-\delta$ tropically generic points. At $y=1$ they specialize to the irreducible Severi degrees 
counting irreducible complex curves of degree $\Delta$ and at $y=-1$ they specialize to the irreducible Welschiger invariants, 
counting irreducible real curves.

\corref{genfun} also provides a generating function for the $N_0^{(S,L),\delta}$, as follows.
By \cite{GS12}, \cite{BG14} we have the formula
\begin{align*}
\sum_{L,\delta} \frac{z^{\dim|L|-\delta}}{(\dim|L|-\delta)!} v^L N_0^{(S,L),\delta}(y)=
\log\left(\sum_{L,\delta}\frac{z^{\dim|L|-\delta}}{(\dim|L|-\delta)!} v^L   N^{(S,L),\delta}(y)\right)\!,
\end{align*}
Here $\big\{v^L\big\}_L$, are elements of the Novikov ring i.e. $v^{L_1}v^{L_2}=v^{L_1+L_2}$. On $\PP^2$,   $\PP(1,1,m)$, $\PP(1,m-1,m)$, $L$ runs through the $nH$ with $n\ge 0$, and on $\Sigma_m$, $L$ runs through $dH+cF$ with $c,d\ge 0$.

Combining this with \corref{genfun}  we get the formulas
\begin{align*}&\sum_{d\ge 0}\sum_{\delta\ge 0}
\frac{t^d q^{d(d+3)/2-\delta}}{(d(d+3)/2-\delta)!}N_0^{d,\delta}(y)=\log\left(\left\<\exp(qH_1(t))\exp(a_{-1})\right\>\right),\\
&\sum_{c\ge 0}\sum_{d\ge 0}\sum_{\delta\ge 0}
\frac{s^c t^d q^{\binom{d+1}{2}m+cd+c+d-\delta}}{(\binom{d+1}{2}m+cd+c+d-\delta)!}N_0^{(\Sigma_m,cF+dH),\delta}(y)\\
&\qquad\qquad\qquad\qquad\qquad\qquad\qquad\qquad=\log\left(\left\<\exp(a_1 s)\exp(qH_m(t))\exp(a_{-1})\right\>\right),\\
&\sum_{d\ge 0}\sum_{\delta\ge 0}
\frac{t^d q^{\frac{m}{2}d^2+(\frac{m}{2}+1)d-\delta}}{(\frac{m}{2}d^2+(\frac{m}{2}+1)d-\delta)!}N_0^{(\PP(1,1,m),dH),\delta}(y)=\log\left(\left\<\exp(qH_m(t))\exp(a_{-1})\right\>\right),\\
&\sum_{d\ge 0}\sum_{\delta\ge 0}
\frac{t^d q^{\binom{m}{2}d^2+md-\delta}}{(\binom{m}{2}d^2+md-\delta)!}N_0^{(\PP(1,(m-1),m),dH),\delta}(y)=\log\left(\left\<\exp(qG_{m-1}(t))\right\>\right).
\end{align*}
\end{remark}

\begin{remark}
\corref{genfun} can be easily extended to relative refined Severi degrees.
For partitions $\alpha$, $\beta$, write $u^\alpha:=\prod_i u_i^{\alpha_i}$, $w^\beta:=\prod_i w_i^{\beta_i}$. Then we have the easy identity
$$\exp\left(\sum_{n>0}\frac{1}{[n]_y} \left(b_{-n}u_n+a_{-n}w_n\right)\right)v_\emptyset
=\sum_{{\alpha,\beta}\atop{\text{partitions}}}\frac{v_{\beta,\alpha}}{I_y^{\alpha+\beta}}u^\alpha w^\beta.$$
Using this, the same arguments as in the proof of \corref{genfun} show for instance
\begin{align*}
\sum_{d\ge 0} \sum_{\delta\ge 0}\sum_{\alpha,\beta}&
\frac{t^d q^{d(d+3)/2-\delta-d+|\beta|}N^{d,\delta}(\alpha,\beta)(y)}{(d(d+3)/2-\delta-d+|\beta|)!}  \frac{u^{\alpha}}{\alpha!}w^\beta\\&=
\left\<\exp(qH_1(t))\exp\left(\sum_{n>0}\frac{u_{n}b_{-n}+w_{n}a_{-n}}{[n]_y}\right)\right\>.\end{align*}
\end{remark}

\begin{remark}\label{resforwelschinger}
The refined Severi degrees $N^{\Delta,\delta}(y)$ and $N^{\Delta,\delta}(\alpha,\beta)(y)$ specialize to the Severi degrees $N^{\Delta,\delta}$ and $N^{\Delta,\delta}(\alpha,\beta)$ at $y=1$ and to the tropical Welschinger invariants 
$W^{\Delta,\delta}$ and $W^{\Delta,\delta}(\alpha,\beta)$ at $y=-1$. 
We write $[\ ,\ ]_{1}$, $\<\ | \ \>_{1}$, (resp. $[\ ,\ ]_{-1}$, $\<\ | \ \>_{-1}$) for the specializations of $[\ ,\ ]$ and $\<\ |\ \>$ at $y=1$ (resp. $y=-1$). 
Thus, to obtain the results for the Severi degrees  we just have to  specialize $y=1$ respectively $y=-1$  in \thmref{NDEL},
\corref{oneleft}, \corref{hexcor}, \corref{grading}. This is the same as replacing $[\ ,\ ]$ by $[\ ,\ ]_1$  respectively $[\ ,\ ]_{-1}$ in the commutation relation of the Heisenberg algebra, and $\<\ |\ \>$ by $\<\ |\ \>_1$  respectively $\< \ |\ \>_{-1}$

\begin{enumerate}
\item
As $[n]_1=n$, we get for $y=1$ the standard Heisenberg algebra modelled on the hyperbolic lattice
with commutation relations 
$[a_n,a_m]_1=0=[b_n,b_m]_1$, $[a_n,b_m]_1=n \delta_{n,-m}.$
and the inner product of the basis vectors $F$ is 
$\<v_{\mu,\nu}|v_{\mu',\nu'}\>_{1}=\prod_i \frac{i^{\mu_i}}{\mu_i!}\prod_j \frac{j^{\nu_j}}{\nu_j!}\delta_{\mu,\nu'}\delta_{\mu',\nu}.$

\item
For the specialization $y=-1$, we find that all $a_n,\ b_n$  with $n$ even lie in the center of the Heisenberg algebra. Therefore we will consider 
the Lie algebra $\H^{odd}$ generated by
the $a_n,\ b_n$,  with $n$ odd, with the commutation relations 
(for $n,m$ odd)
$$[a_n,a_m]_{-1}=0=[b_n,b_m]_{-1},\quad  [a_n,b_m]_{-1}=(-1)^{(n-1)/2} \delta_{n,-m}.$$
The Fock space $F^{odd}$ is  generated by applying the creation operators in $\H^{odd}$  to $v_\emptyset$. 
We call a partition $\mu=(1^{\mu_1},2^{\mu_2},\ldots)$ {\it odd} if $\mu_i=0$ for $i$ even.
Then a basis of $F^{odd}$ is given by the vectors $v_{\mu,\nu}=a_{-\mu}b_{-\nu} v_\emptyset$ with $\mu$ and $\nu$ odd, and the inner product of the basis vectors is given by
\begin{equation}\label{welin}\<v_{\mu,\nu}|v_{\mu',\nu'}\>_{-1}=\frac{(-1)^{(\|\mu\|+\|\nu\| -|\mu|-|\nu|)/2}}{\prod_{i}\mu_i!\prod_j \nu_j!}\delta_{\mu,\nu'}\delta_{\mu',\nu}.
\end{equation}
If we restrict attention to the {\it absolute} Welschinger invariants $W^{(S,L),\delta}$,  instead of the relative Welschinger invariants  $W^{(S,L),\delta}(\alpha,\beta)$,
we see that the right hand sides of the formulas of \thmref{NDEL}, \corref{oneleft}, \corref{hexcor}, are of the form
$\left\<v_{\mu,\nu}\left| Av_{\mu',\nu'}\right.\right\>$ for some element $A\in \H$, and $\mu,\nu$ satisfying  $\|\mu\|-|\mu|=\|\nu\|-|\nu|=0$. 
Therefore we can replace the inner product in \eqref{welin} by the standard inner product on $F^{odd}$
\begin{equation}\label{welinn}\<v_{\mu,\nu}|v_{\mu',\nu'}\>_{*}=\frac{\delta_{\mu,\nu'}\delta_{\mu',\nu}}{\prod_{i}\mu_i!\prod_j \nu_j!}.
\end{equation}

 For simplicity we only formulate the version for the Welschinger invariants of \corref{hexcor}:
Denote 
\begin{align*}
H_m^{odd}(t)&:=\sum_{k>0\ \text{odd}} b_{-k}b_{k}+\sum_{\|\mu\|=\|\nu\|-m }a_{-\mu}a_{-\nu}\\
G^{odd}_m(t)&:=\sum_{k>0\ \text{odd}} b_{-k}b_{k} +\sum_{\|\mu\|=\|\nu\|+1}
a_{-\mu}a_{\nu}+t\sum_{\|\mu\|=\|\nu\|-m}
a_{-\mu}a_{\nu}.
\end{align*}
where the second (respectively second and third) sums are now over pairs of odd partitions.
Then we have
\begin{align*}
W^{d,\delta}&=\left\<v_\emptyset\left|\Coeff_{t^d}\left[H_1^{odd}(t)^{d(d+3)/2-\delta}]\right]\right|v_{(1^d),\emptyset}\right\>_{*},\\
W^{(\Sigma_m,cF+dH),\delta}(y)&=
\left\< v_{(1^c),\emptyset}\left| \Coeff_{t^d}\left[H^{odd}_m(t)^{\binom{d+1}{2}m+cd+c+d-\delta}\right]\right|  v_{(1^{dm+c}),\emptyset}\right\>_{*},\\
W^{(\PP(1,(m-1),m),dH),\delta}(y)&=\left\< \Coeff_{t^d}\left[G^{odd}_{m-1}(t)^{\binom{m}{2}d^2+md-\delta}\right]\right\>_{*}.
\end{align*}
\end{enumerate}
\end{remark}

\section{Marked floor diagrams}
\label{sec:floor_diagrams}

Let $\Delta$ be an $h$-transverse polygon given by
$d_\Delta^b, d_\Delta^t\in \Z_{\ge 0}$ and multisets $r_\Delta,l_\Delta$ with
 $| l_\Delta|=|r_\Delta|$, such that $d_\Delta^t+\| r_\Delta\| =d_\Delta^b+\|l_\Delta\|$
as in Section~\ref{sec:h-transverse_polygons}. Recall that $h_\Delta =
|l_\Delta| = |r_\Delta|$ is the height of $\Delta$.

\begin{definition}
A \emph{$\Delta$-floor diagram} $\D$
consists of:
\begin{itemize}
\item
two orderings $R$ and $L$ of $r_{\Delta}$ and $l_{\Delta}$, and a
sequence $(s_1, \dots, s_{h_\Delta})$ of non-negative integers such
that  $|s| = d_\Delta^t$,
\item
a graph on a vertex set $\{1, \dots, h_\Delta \}$ of \emph{white vertices}, possibly with multiple edges, with edges directed $i \to j$ for $i<j$, and
\item
edge weights $w(e) \in \ZZ_{>0}$ for all edges $e$ such that
for every vertex $j$,
\begin{displaymath}
\dive(j) := \sum_{ \tiny
     \begin{array}{c}
  \text{edges }e\\
j \stackrel{e}{\to} k
     \end{array}
} w(e) -   \sum_{ \tiny
     \begin{array}{c}
  \text{edges }e\\
i \stackrel{e}{\to} j
     \end{array}
} w(e) \leq r_j - l_j + s_j. \,  \footnote{This inequality will become clear when we define marked floor diagrams.}
\end{displaymath}
\end{itemize}
We call $\aa:=(d^{\,t},R-L)$ the \emph{divergence sequence}.
\end{definition}

\begin{figure}[!htb]
\begin{center}
\begin{picture}(50,23)(30,-10)\setlength{\unitlength}{4pt}\thicklines
\oooo\Eeee\eEee\eeEe
\qbezier(10.8,-0.6)(13.75,-3)(20.5,-3)\qbezier(20.5,-3)(27,-3)(29.2,-0.6)
\put(25,1.5){\makebox(0,0){$2$}} 
\end{picture}
\end{center}
\caption{Floor diagram with $R = (1, 1, 1, 1)$, $L = (0, 0, 0, 0)$,
  and $S = (0,0,0,0)$.}
\label{fig:floor_diagram}
\end{figure}
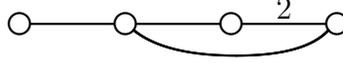

\begin{definition}\label{def:marking}
A \emph{marked floor diagram} or \emph{marking} is obtained from a floor diagram $\D$ as follows:.

{\bf Step 1:} For each vertex $j$ of $\D$, create $s_j$ new
indistinguishable \emph{black vertice}s and
connect them to $j$ with new edges directed towards $j$. 

{\bf Step 2:} For each vertex $j$ of $\D$, create $R_j - L_j + s_j- \textrm{div}(j)$ new indistinguishable vertices and
connect them to $j$ with new edges directed away from $j$. This makes the divergence of vertex $j$ equal to $R_j - L_j$ for $1 \leq j \leq h_\Delta$.

{\bf Step 3:} Subdivide each edge of the original floor
diagram $\D$ into two
directed edges by introducing a new
vertex for each edge. The new edges inherit their weights and orientations. Denote the
resulting graph $\tilde{\D}$.

{\bf Step 4:} Linearly order the vertices of $\tilde{\D}$
extending the order of the vertices of the original floor
diagram $\D$ such that, as before, each edge is directed from a
smaller vertex to a larger vertex.

The extended graph $\Gamma$
together with the linear order on
its vertices is called a \emph{marked floor diagram}

\begin{figure}[!htb]
\begin{center}
\begin{picture}(50,35)(145,-8)\setlength{\unitlength}{4pt}\thicklines
\put(0,0){\circle{2}}
\put(8,0){\circle*{2}}
\put(16,0){\circle{2}}
\put(24,0){\circle*{2}}
\put(32,0){\circle*{2}}
\put(40,0){\circle{2}}
\put(48,0){\circle*{2}}
\put(56,0){\circle{2}}
\put(64,0){\circle*{2}}
\put(72,0){\circle*{2}}
\put(80,0){\circle*{2}}
\put(88,0){\circle*{2}}

\put(1,0){\line(1,0){6}}
\put(9,0){\line(1,0){6}}
\put(17,0){\line(1,0){6}}
\put(33,0){\line(1,0){6}}
\put(41,0){\line(1,0){6}}
\put(49,0){\line(1,0){6}}
\put(57,0){\line(1,0){6}}

\put(44,2){\makebox(0,0){$2$}}
\put(52,2){\makebox(0,0){$2$}}

\qbezier(16.8,0.6)(19,3)(24,3)\qbezier(24,3)(29,3)(31.2,0.6)
\qbezier(24.8,-0.6)(30,-5)(40,-5)\qbezier(40,-5)(49,-5)(55.2,-0.6)
\qbezier(56.8,-0.6)(59,-3)(64,-3)\qbezier(64,-3)(69,-3)(71.2,-0.6)
\qbezier(56.8,-0.6)(61,-5)(68,-5)\qbezier(68,-5)(74,-5)(79.2,-0.6)
\qbezier(56.8,0.6)(62,5)(72,5)\qbezier(72,5)(81,5)(87.2,0.6)
\end{picture}
\end{center}
\caption{Marking for floor diagram of Figure~\ref{fig:floor_diagram}.}
\end{figure}
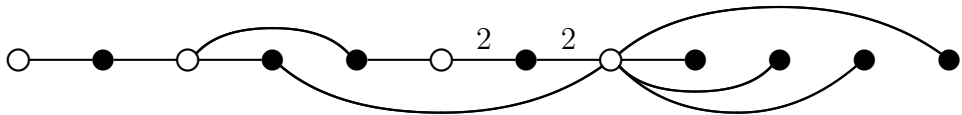
\end{definition}

We need to count marked floor diagrams up to equivalence. Two such $\Gamma_1$, $\Gamma_2$ are \emph{equivalent} if
$\Gamma_1$ can be obtained from $\Gamma_2$ by permuting edges
without changing their weights; \emph{i.e.,} if there exists an
automorphism of weighted graphs which preserves the vertices of $\D$ and maps $\Gamma_1$ to
$\Gamma_2$.

If $\Gamma$ is a marked floor diagram obtained from a $\Delta$-floor
diagram we label its $k$~vertices $\{1,\ldots,k\}$ left to
right.
The {\em cogenus} of $\Gamma$ is $\delta(\Gamma):=\#\Delta-1-k$.

Our definition of the cogenus of a marked floor diagrams agrees
with \cite{FM, AB10}: let $\Gamma$ be a marked $\Delta$-floor
diagram corresponding to a tropical curve $C$ of degree $\Delta$ with
cogenus $\delta(C)$ through a (vertically stretched, see
\cite[Definition~3.6]{BG14}) tropical point configuration $\Pi$. Then
$|\Pi| = \# \Delta -1 - \delta(C)$. The vertices of $\Gamma$ correspond
to the points in $\Pi$, see \cite[Section~5.2]{BM2}, so $\Gamma$ has
$\# \Delta -1 - \delta(C)$ vertices. We defined the cogenus
$\delta(\Gamma)$ precisely so that $\delta(\Gamma)= \delta(C)$.

The \emph{refined multiplicity} of $\Gamma$ is 
$$\mult(\Gamma,y):=\prod_{\text {edges $e$ of $\Gamma$}} [w(e)]_y.$$
We can compute the refined Severi degrees $N^{\Delta, \delta}$, for
$h$-transverse $\Delta$, in terms of marked floor diagrams.

\begin{theorem}[{\cite[Theorem~2.7]{AB10}}]
For any $h$-transverse polygon $\Delta$ and any $\delta \ge 0$:
\[
N^{\Delta, \delta} = \sum_{[\Gamma]} \mult(\Gamma, y).
\]
The sum is over equivalence classes $[\Gamma]$ of marked $\Delta$-floor diagrams of cogenus $\delta$.
\end{theorem}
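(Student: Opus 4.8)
The plan is to prove the identity by exhibiting a multiplicity-preserving bijection between the tropical curves counted by the refined Severi degree $N^{\Delta,\delta}(y)$ and the equivalence classes of marked $\Delta$-floor diagrams of cogenus $\delta$. Since $N^{\Delta,\delta}(y)$ is independent of the generic point configuration (by \cite[Theorem~1]{IM12} and \cite[Theorem~7.3]{BG14}), I would first replace the arbitrary configuration by a \emph{vertically stretched} one $\Pi$ in the sense of \cite[Definition~3.6]{BG14}, for which the combinatorics of the tropical curves through $\Pi$ becomes rigid.

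For such a configuration I would invoke the Brugall\'e--Mikhalkin floor decomposition: because $\Delta$ is $h$-transverse, every $\delta$-nodal tropical curve $(C,h)$ of degree $\Delta$ through $\Pi$ splits into horizontal \emph{floors} joined by vertical \emph{elevator} edges, and reading off the floors, their elevators with weights, and the left/right/top boundary directions produces exactly the data of a marked $\Delta$-floor diagram $\Gamma$ (\cite[Section~5.2]{BM2}). Here the divergence inequality in the definition of a $\Delta$-floor diagram is the balancing condition at each floor, the orderings $R,L$ and the sequence $(s_j)$ record the right, left, and top edges of $\partial\Delta$, and Steps~1--4 of \defref{def:marking} reconstruct the unbounded and subdivided edges. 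As recalled just before the theorem, the vertices of $\Gamma$ correspond to the $|\Pi|=\#\Delta-1-\delta$ points of $\Pi$, so $\delta(\Gamma)=\delta(C)=\delta$ and the assignment $(C,h)\mapsto[\Gamma]$ is a cogenus-preserving bijection onto equivalence classes of markings, two markings from the same curve differing only by permutations of parallel equal-weight edges.

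The main point, and the step I expect to be the principal obstacle, is to show that this bijection preserves refined multiplicities, i.e.\ that $\prod_{v}[\Area(\Delta_v)]_y=\prod_{e}[w(e)]_y$, the left product over the $3$-valent vertices of $(C,h)$ and the right over the edges of $\Gamma$. I would prove this by analysing the dual subdivision of $\Delta$ determined by the stretched curve. The crucial local fact is that a trivalent vertex at which a weight-$w$ elevator meets a (unimodular) floor has dual triangle of lattice area exactly $w$, since the Mikhalkin multiplicity of such a vertex is $w$ when the adjacent floor edge is unimodular. Each bounded elevator of weight $w$ has two such endpoints and is split into two weight-$w$ edges in Step~3, so it contributes $[w]_y^2$ on both sides; the remaining trivalent vertices have unimodular dual triangles and correspond to the weight-$1$ edges introduced in Steps~1 and~2, contributing the trivial factor $[1]_y=1$. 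Matching these factor by factor gives $\mult(C,h;y)=\mult(\Gamma,y)$ for corresponding objects, and summing over the bijection yields the theorem.

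The genuinely delicate part is thus the area--weight dictionary for general $h$-transverse $\Delta$: one must verify that under the vertical stretching the \emph{only} non-unimodular triangles of the dual subdivision occur at elevator endpoints, with area equal to the elevator weight, and that the left/right boundary edges (which may carry nontrivial slopes $1/k$) produce no extra factors beyond those already recorded by the divergence data. At $y=1$ this specializes to the classical Mikhalkin vertex-multiplicity identity underlying \cite[Theorem~2.7]{AB10}; the content beyond that reference is that the same local area-equals-weight matching survives the $y$-deformation, because both sides are assembled from the single quantum number $[w]_y$.
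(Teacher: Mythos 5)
Your argument is essentially correct, but it takes a different route from the paper: the paper does not prove this statement at all. It quotes \cite[Theorem~2.7]{AB10} (whose refined, $y$-deformed version rests on the tropical correspondence of \cite{IM12} and \cite{BG14}) and confines its own contribution to a short verification that the cited formulation --- a sum of $\mult(\D,y)\nu(\D)$ over floor diagrams $\D$, with $\nu(\D)$ the number of markings --- agrees with the formulation used here as a sum over equivalence classes of \emph{marked} diagrams, the key observation being that $\mult(\Gamma,y)=\mult(\D,y)$ because each inner edge of $\D$ of weight $w$ is subdivided into two edges of weight $w$ while all edges added in Steps~1--2 have weight $1$. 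What you do instead is reconstruct the content of the cited references: pass to a vertically stretched configuration, apply the Brugall\'e--Mikhalkin floor decomposition, and match multiplicities locally via the dictionary ``dual triangle of a floor--elevator vertex has lattice area equal to the elevator weight.'' Your local analysis is sound (in each height-one strip of the dual subdivision every triangle has a horizontal edge, dual to a vertical edge of the curve, so every trivalent vertex is an elevator or vertical-end endpoint, and a bounded weight-$w$ elevator contributes $[w]_y^2$ on both sides), and you correctly identify that the only content beyond the $y=1$ statement of \cite{AB10} is that this area--weight matching is compatible with the $y$-deformation. The trade-off is the expected one: your version is self-contained but reproduces several pages of \cite{BM2,AB10,BG14}, whereas the paper's version is two sentences but leaves the refined correspondence entirely to the references. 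If you keep your route, the one point to make fully precise is the claim that your map lands exactly on equivalence classes of markings --- i.e.\ that a stretched curve determines its marking only up to permutations of equal-weight parallel edges, and that conversely every equivalence class arises from exactly one curve; this is where the passage from ``number of markings of $\D$'' to ``equivalence classes $[\Gamma]$'' is hidden.
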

In \cite[Theorem~2.7]{AB10} the formula is instead a sum of $\mult(\D, y)\nu(\D)$ over all floor diagrams $\D$ of cogenus $\delta$, where $\mult(\D)$ is the number of markings of $\D$ up to isomorphism, but this is clearly equivalent.
 Note that if $\Gamma$ is a  marking of $\D$, then 
  $\mult(\Gamma, y)=\mult(\D,y)$, because all the inner edges of $\D$ (which are the only ones with multiplicity different from $1$) are divided in $\Gamma$ into 
  two edges.

\begin{remark}
We find the following explicit description of marked $\Delta$-floor diagrams.
A marked $\Delta$-floor diagram $\Gamma$ of cogenus $\delta$ is a directed colored graph with vertex set $\{1,\ldots,\#\Delta-1-\delta\}$.
All edges are directed $i\to j$ with $i<j$. The vertices $i$ which lie only on edges $i\to j$ with $i<j$ are called \emph{source vertices}, and those which lie only on edges 
$j\to i$ are called \emph{sink vertices}. 
The diagrams are required to satisfy the following:
\begin{enumerate}
\item There are $h_\Delta$ white vertices $\{i_1,\ldots,i_{h_{\Delta}}\}$, the other vertices are black, all edges connect vertices of different colors.
\item A black vertex that is not a source or sink vertex has precisely one incoming and one outgoing edge, both of the same weight.
\item There are precisely $d^t_\Delta$ black source vertices and $d^b_\Delta$ black  sink vertices.
\item All the edges connected to black source or sink vertices have weight $1$.
\item There are orderings $R=(R_1,\ldots, R_{h_\Delta})$ and $L=(L_1,\ldots, L_{h_\Delta})$ of $r_\Delta$ and $l_\Delta$, such that for every white vertex $i_j$
we have \begin{displaymath}
\dive(i_j) := \sum_{ \tiny
     \begin{array}{c}
  \text{edges }e\\
i_j \stackrel{e}{\to} k
     \end{array}
} w(e) -   \sum_{ \tiny
     \begin{array}{c}
  \text{edges }e\\
i \stackrel{e}{\to} i_j
     \end{array}
} w(e)= R_j - L_j.
\end{displaymath}
\end{enumerate}
The equivalence relation is by permuting the edges of the same weight, leaving all the white vertices fixed.
\end{remark}

As in the case of tropical curves earlier, there is a relative version
of marked floor diagrams. Let $\alpha$ and $\beta$ be two sequences with
$\|\alpha\| + \|\beta\| = d_\Delta^b$. 

\begin{definition}\label{def:albemarking}
An \emph{$(\alpha, \beta)$-marked floor diagram} or \emph{$(\alpha, \beta)$-marking} of a floor diagram $\D$ is defined as follows:

{\bf Step 1:} As Step~1 in Definition~\ref{def:marking}.

{\bf Step 2:} Fix a pair of collections of sequences $( \{ \alpha^i
\}, \{ \beta^i \} )$, where $i$ runs over the vertices of $\D$, with:
\begin{enumerate}
\item The sums over each collection satisfy $\sum_i \alpha^i =
  \alpha$ and  $\sum_i \beta^i =
  \beta$.
\item For all vertices $i$ of $\D$ we have $\sum_{j \ge 1} j(\alpha_j^i+ \beta_j^i)= R_i - L_i + s_i- \textrm{div}(i)$.
\end{enumerate} 
The second condition says that the ``degree of the pair $(\alpha^i,
\beta^i)$'' is compatible with the divergence at vertex $i$.
Each such pair $( \{ \alpha^i
\}, \{ \beta^i \} )$ is called \emph{compatible} with $\D$ and $(\alpha,
\beta)$.

{\bf Step 3:}  For each vertex $i$ of $\D$ and every $j \ge 1$ create
$\beta_j^i$ new
vertices, called \emph{$\beta$-vertices} and illustrated as
\begin{picture}(4,0)(0,0)\setlength{\unitlength}{4pt}\thicklines
\put(1,1){\circle*{2}}
\end{picture}
, and connect them to $i$ with new edges of weight $j$
directed away from $i$.  For each vertex $i$ of $\D$ and every $j \ge 1$ create
$\alpha^i_j $ new
vertices, called \emph{$\alpha$-vertices} and illustrated as
\begin{picture}(4,0)(0,0)\setlength{\unitlength}{4pt}\thicklines
\put(1,1){\circle{2}}
\put(1,1){\circle*{1}}
\end{picture}
, and connect them to $i$ with new edges of weight $j$
directed away from $i$.

{\bf Step 4:} Subdivide each edge of the original floor
diagram $\D$ into two
directed edges by introducing a new
vertex for each edge. The new edges inherit their weights and orientations. Denote the
resulting graph $\tilde{\D}$.

{\bf Step 5:} Linearly order the vertices of $\tilde{\D}$
extending the order of the vertices of the original floor
diagram $\D$ such that, as before, each edge is directed from a
smaller vertex to a larger vertex. Furthermore, we require that the $\alpha$-vertices are largest among
all vertices, and
for every pair of $\alpha$-vertices $i' > i$, the weight of the $i'$-adjacent
edge is larger than or equal to the weight of the $i$-adjacent edge.
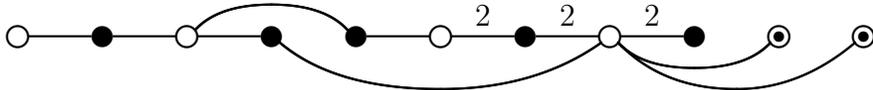
\begin{figure}[!htb]
\begin{center}
\begin{picture}(50,35)(145,-8)\setlength{\unitlength}{4pt}\thicklines
\put(0,0){\circle{2}}
\put(8,0){\circle*{2}}
\put(16,0){\circle{2}}
\put(24,0){\circle*{2}}
\put(32,0){\circle*{2}}
\put(40,0){\circle{2}}
\put(48,0){\circle*{2}}
\put(56,0){\circle{2}}
\put(64,0){\circle*{2}}
\put(72,0){\circle{2}}
\put(72,0){\circle*{1}}
\put(80,0){\circle{2}}
\put(80,0){\circle*{1}}

\put(1,0){\line(1,0){6}}
\put(9,0){\line(1,0){6}}
\put(17,0){\line(1,0){6}}
\put(33,0){\line(1,0){6}}
\put(41,0){\line(1,0){6}}
\put(49,0){\line(1,0){6}}
\put(57,0){\line(1,0){6}}

\put(44,2){\makebox(0,0){$2$}}
\put(52,2){\makebox(0,0){$2$}}
\put(60,2){\makebox(0,0){$2$}}

\qbezier(16.8,0.6)(19,3)(24,3)\qbezier(24,3)(29,3)(31.2,0.6)
\qbezier(24.8,-0.6)(30,-5)(40,-5)\qbezier(40,-5)(49,-5)(55.2,-0.6)
\qbezier(56.8,-0.6)(59,-3)(64,-3)\qbezier(64,-3)(69,-3)(71.2,-0.6)
\qbezier(56.8,-0.6)(61,-5)(68,-5)\qbezier(68,-5)(74,-5)(79.2,-0.6)
\end{picture}
\end{center}
\caption{$(1^2, 2^1)$-marking for floor diagram of
  Figure~\ref{fig:floor_diagram}.}
\label{fig:rel_floor_diagram_marking}
\end{figure}
\end{definition}

The extended graph $\Gamma$
 together with the linear order on
its vertices is called an
\emph{$(\alpha, \beta)$-marking} of the original floor diagram $\D$,
  and the diagram obtained this way is called an
\emph{$(\alpha,\beta)$-marked $\Delta$-floor diagram}. 
We count $(\alpha,\beta)$-marked floor
diagrams up to equivalence. Two $(\alpha,\beta)$-marked $\Delta$-floor diagrams
$\Gamma_1$, $\Gamma_2$  are \emph{equivalent} if there exists a weight preserving isomorphism of weighted
graphs mapping $\Gamma_1$ to $\Gamma_2$ which fixes the white vertices.
The \emph{refined multiplicy} of $\Gamma$ is
\[
\mult(\Gamma, y) = \prod_e [w(e)]_y
\]
where the product ranges over all edges $e$ of $\Gamma$ excluding edges adjacent
to $\alpha$-vertices.

The following theorem can be proved by combining the argument of the
proofs of \cite[Proposition 7.7]{BG14} ($S = \PP^2$ with tangency
conditions) and \cite[Theorem 5.7]{BG14} ($S = S(\Delta)$ without tangency
conditions) in a straightforward way.

\begin{theorem}
For any $h$-transverse polygon $\Delta$, any $\delta \ge 0$, and any
pair of sequences $\alpha$ and $\beta$ with $\|\alpha\| + \|\beta\| = d_\Delta^b$:
\[
N^{\Delta, \delta}(\alpha, \beta)(y) = \sum_{[\Gamma]} \mult(\Gamma, y) ,
\]
with the sum over the equivalence classes of $(\alpha,\beta)$-marked $\Delta$-floor diagrams.
\end{theorem}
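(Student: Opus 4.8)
The plan is to produce a multiplicity-preserving bijection between the $\delta$-nodal tropical curves of degree $\Delta$ through $\Pi$ that are $(\alpha,\beta)$-tangent to $D$ and the equivalence classes of $(\alpha,\beta)$-marked $\Delta$-floor diagrams, and then to deduce the displayed identity by summing refined multiplicities on both sides. The two inputs from \cite{BG14} supply the two halves of this bijection: \cite[Theorem~5.7]{BG14} governs the degeneration of tropical curves of degree $\Delta$ into floors for a general $h$-transverse $\Delta$ (but with no tangency data), while \cite[Proposition~7.7]{BG14} governs the tangency bookkeeping along a horizontal line (but only for $\Delta$ the triangle of $\PP^2$). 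I would run the first argument to set up the floor structure and then graft the second onto the behavior at $D$.

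First I would choose $\Pi$ to be vertically stretched in the sense of \cite[Definition~3.6]{BG14}. As in the proof of \cite[Theorem~5.7]{BG14}, every tropical curve $C$ through such a $\Pi$ is floor decomposed: it breaks into horizontal floors joined by elevator edges, and for an $h$-transverse $\Delta$ the resulting combinatorial data is exactly a $\Delta$-floor diagram $\D$. Each floor meets one point of $\Pi$ and becomes a white vertex; the elevators become the edges of $\D$ carrying their weights $w(e)$; and the balancing at each floor, together with the right/left edge data $r_\Delta,l_\Delta$ and the top-edge sequence $s$, yields the divergence condition $\dive(j)\le R_j-L_j+s_j$. Steps~3--4 of Definition~\ref{def:albemarking} (subdividing each interior elevator and linearly ordering all vertices) record precisely the relative heights at which these events occur along the stretched configuration.

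Second, I would analyze the intersection with $D$ exactly as in \cite[Proposition~7.7]{BG14}. The line $D$, placed far below, is met only by the downward unbounded elevator ends of $C$; the $(\alpha,\beta)$-tangency condition says that for each weight $i$ there are $\alpha_i+\beta_i$ such ends of weight $i$, of which $\alpha_i$ are pinned to the prescribed points $\Pi\cap D$. The pinned ends become the $\alpha$-vertices and the free ones the indistinguishable $\beta$-vertices of Step~3, while the choice of a compatible splitting $(\{\alpha^i\},\{\beta^i\})$ in Step~2 records which floor each downward end leaves. The ordering requirement of Step~5 --- that the $\alpha$-vertices are largest and sorted by the weight of their adjacent edge --- is the normal form that counts, without repetition, the distinct ways of threading the fixed points $\Pi\cap D$ through the weighted ends. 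For the multiplicities, the standard floor-diagram computation gives $\mult(C;y)=\prod_v[\Area(\Delta_v)]_y=\prod_e[w(e)]_y$ over all edges of $\Gamma$, and since the $\alpha$-adjacent edges contribute exactly $\prod_i[i]_y^{\alpha_i}$, the relative normalization $\mult_{\alpha,\beta}(C;y)=\mult(C;y)/\prod_i[i]_y^{\alpha_i}$ matches $\mult(\Gamma,y)$, whose product omits the $\alpha$-adjacent edges.

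The main obstacle is the enumerative matching at $D$: showing that the number of inequivalent tropical curves realizing a fixed $\D$ and a fixed compatible splitting equals the number of inequivalent $(\alpha,\beta)$-markings. This requires reconciling the indistinguishability of the $\beta$-ends, the weight-ordering normal form of the $\alpha$-ends in Step~5, and the genericity forced by the vertical stretching, so that each tropical curve is counted exactly once. This is precisely the delicate point settled for $\PP^2$ in \cite[Proposition~7.7]{BG14}; once it is transcribed into the floor-diagram language of \cite[Theorem~5.7]{BG14}, the remaining verifications (balancing, divergence, and the multiplicity identity above) are routine.
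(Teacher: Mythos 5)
Your proposal follows exactly the route the paper takes: the paper's entire ``proof'' is the single remark that the theorem follows by combining the arguments of \cite[Proposition~7.7]{BG14} (tangency conditions, for $\PP^2$) with \cite[Theorem~5.7]{BG14} (general $h$-transverse $\Delta$, no tangency), and you invoke precisely these two inputs in the same roles, while also correctly identifying the multiplicity normalization by $\prod_i [i]_y^{\alpha_i}$ and the delicate matching at $D$. Your write-up is in fact more detailed than the paper's one-sentence justification.
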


For the comparison with Feynman graphs, we want to consider a slightly modified version of the $(\alpha,\beta)$-marked $\Delta$-floor diagrams, which we call
{\it extended} $(\alpha,\beta)$-marked $\Delta$-floor diagrams:

{\bf Step 6:}
 Given an $(\alpha,\beta)$-marked $\Delta$-floor diagram $\Gamma$, with vertices $\{1,\ldots,k\}$ we define its extension $\widehat \Gamma$, 
by adding a white vertex $0$ with $d^t_\Delta$ outgoing  edges of weight 1 and connecting it to the $d^t_\Delta$ black source vertices of $\Gamma$.
Furthermore we add a white vertex $k+1$ and connect it to the $|\beta|$ black sink vertices $l_i$ of $\Gamma$, each by an edge of the same weight as the edge ending in $l_i$.
Finally we make the $\alpha$-vertices black.

\begin{figure}[!htb]
\begin{center}
\begin{picture}(50,55)(145,-25)\setlength{\unitlength}{4pt}\thicklines
\put(0,0){\circle{2}}
\put(8,0){\circle*{2}}
\put(16,0){\circle{2}}
\put(24,0){\circle*{2}}
\put(32,0){\circle*{2}}
\put(40,0){\circle{2}}
\put(48,0){\circle*{2}}
\put(56,0){\circle{2}}
\put(64,0){\circle*{2}}
\put(72,0){\circle*{2}}
\put(80,0){\circle*{2}}
\put(88,0){\circle{2}}

\put(1,0){\line(1,0){6}}
\put(9,0){\line(1,0){6}}
\put(17,0){\line(1,0){6}}
\put(33,0){\line(1,0){6}}
\put(41,0){\line(1,0){6}}
\put(49,0){\line(1,0){6}}
\put(57,0){\line(1,0){6}}

\put(44,2){\makebox(0,0){$2$}}
\put(52,2){\makebox(0,0){$2$}}
\put(60,2){\makebox(0,0){$2$}}
\put(76,7){\makebox(0,0){$2$}}

\qbezier(16.8,0.6)(19,3)(24,3)\qbezier(24,3)(29,3)(31.2,0.6)
\qbezier(24.8,-0.6)(30,-5)(40,-5)\qbezier(40,-5)(49,-5)(55.2,-0.6)
\qbezier(56.8,-0.6)(59,-3)(64,-3)\qbezier(64,-3)(69,-3)(71.2,-0.6)
\qbezier(56.8,-0.6)(61,-5)(68,-5)\qbezier(68,-5)(74,-5)(79.2,-0.6)
\qbezier(64.8,0.6)(69,5)(76,5)\qbezier(76,5)(82,5)(87.2,0.6)

\end{picture}
\end{center}
\caption{Extension of Figure~\ref{fig:rel_floor_diagram_marking}. Note
  that this diagram is also a Feynman diagram for $a_{(1)} b_{-1} b_{1} a_{-(1)}
  a_{(1^2)} (b_{-1} b_{1})^2 a_{-(1)} a_{(2)} b_{-2} b_{2} a_{-(1,2)}
  a_{(1^2, 2)} b_{-2} b_{2} (b_{-1})^2 a_{-(2)}$ (see below).}
\end{figure}

\begin{remark}\label{extendmark}
We find the following explicit description of extended $(\alpha,\beta)$-marked $\Delta$-floor diagrams.
An $(\alpha,\beta)$-marked $\Delta$-floor diagram of cogenus $\delta$
is a
directed colored weighted graph $\Gamma$  with vertex set $\{0,\ldots,t+1\}$ with $t=\#\Delta-1-\delta-\|\alpha\|-\|\beta\|+|\alpha|+|\beta|$, we also let  $s:=t-|\alpha|$ and for each $l\ge 0$ let $s_l=s+\sum_{j\le l}\alpha_j$.
All edges are directed $i\to j$ with $i<j$. 
The diagrams are required to satisfy the following:
\begin{enumerate}
\item There are indices $0<i_1<\ldots<i_{h_{\Delta}}\le s$, such that the white vertices are $\{0,i_1,\ldots,i_{h_{\Delta}},t+1\}$. The other vertices are black. $0$, $s+1, \ldots, t+1$,  are sink vertices.
\item All edges connect vertices of different colors. 
\item Each black vertex, with the exception of  the sink vertices,  has precisely one incoming and one outgoing edge of the same weight. 
\item The edges connected to $0$, $s+1, \ldots, t+1$ are the following:
\begin{enumerate}
\item
$0$ has  $d^t_{\Delta}$ outgoing edges, all of weight $1$. 
\item For $i=s_{k}+1,\ldots,s_{k+1}$ the vertex $i$ has one incoming  edge of weight $k$.
\item $t+1$ has  $\beta_i$ incoming edges of weight $i$ for all $i$. 
\end{enumerate}
\item There are orderings $R=(R_1,\ldots, R_{h_\Delta})$ and $L=(L_1,\ldots, L_{h_\Delta})$ of $r_\Delta$ and $l_\Delta$, such that for every white vertex $i_j\not \in\{0,t+2\}$
we have \begin{displaymath}
\dive(i_j) := \sum_{ \tiny
     \begin{array}{c}
  \text{edges }e\\
i_j \stackrel{e}{\to} k
     \end{array}
} w(e) -   \sum_{ \tiny
     \begin{array}{c}
  \text{edges }e\\
i \stackrel{e}{\to} i_j
     \end{array}
} w(e)= R_j - L_j.
\end{displaymath}
\end{enumerate}
An equivalence $\widehat\Gamma_1\to \widehat\Gamma_2$ of  extended $(\alpha,\beta)$-marked $\Delta$-floor diagrams is an isomorphism of weighted directed graphs by permutation of the edges, fixing the white vertices. We also put 
$$\mult(\widehat \Gamma,y):=\prod_{e \text{ edges of }\widehat \Gamma} [w(e)]_y.$$
\end{remark}

\begin{corollary}
$$N^{\Delta,\delta}(\alpha,\beta)=\frac{1}{I_y^{\alpha+\beta}} \sum_{[\widehat \Gamma]} \mult(\widehat\Gamma,y).$$
The sum is over equivalence classes of extended $(\alpha,\beta)$-marked $\Delta$-floor diagrams. 
\end{corollary}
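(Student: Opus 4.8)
The plan is to deduce the statement directly from the preceding theorem, which expresses $N^{\Delta,\delta}(\alpha,\beta)(y)=\sum_{[\Gamma]}\mult(\Gamma,y)$ as a sum over equivalence classes of $(\alpha,\beta)$-marked $\Delta$-floor diagrams, by comparing that sum with the sum over the extended diagrams $\widehat\Gamma$. The extension of Step~6 assigns to each $(\alpha,\beta)$-marked diagram $\Gamma$ a canonically determined extended diagram $\widehat\Gamma$, and I will show that this assignment (i) descends to a bijection on equivalence classes and (ii) scales the multiplicity by exactly $I_y^{\alpha+\beta}=\prod_i[i]_y^{\alpha_i+\beta_i}$. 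The corollary then follows by summing and dividing by $I_y^{\alpha+\beta}$.

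First I would verify the multiplicity ratio. The edge set of $\widehat\Gamma$ consists of the edges of $\Gamma$ together with the $d^t_\Delta$ weight-$1$ edges emanating from the new vertex $0$ and the new edges joining each $\beta$-vertex to the new vertex $t+1$. The weight-$1$ edges from $0$ contribute $[1]_y^{d^t_\Delta}=1$. Passing from $\mult(\Gamma,y)$ to $\mult(\widehat\Gamma,y)$ reinstates the edges adjacent to the $\alpha$-vertices, which were excluded from $\mult(\Gamma,y)$; these contribute $\prod_j[j]_y^{\alpha_j}$. Finally, the new $\beta$-edge attached to each $\beta$-vertex carries, by construction, the same weight as the edge already entering that vertex, contributing $\prod_j[j]_y^{\beta_j}$. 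Hence $\mult(\widehat\Gamma,y)=\mult(\Gamma,y)\cdot\prod_j[j]_y^{\alpha_j+\beta_j}=I_y^{\alpha+\beta}\cdot\mult(\Gamma,y)$, independently of $\Gamma$.

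Next I would establish the bijection on equivalence classes. Since the extra vertices $0$ and $t+1$ and their incident edges are attached in a way depending only on $\Gamma$, any equivalence $\Gamma_1\to\Gamma_2$ (a weight-preserving isomorphism fixing the white vertices) extends to an equivalence $\widehat\Gamma_1\to\widehat\Gamma_2$ fixing $0$ and $t+1$, so the map is well defined on classes. For injectivity and surjectivity I would exhibit the inverse: given $\widehat\Gamma$, delete $0$ and $t+1$ together with their incident edges and recolor the $\alpha$-vertices. The key point is that in $\widehat\Gamma$ the $\alpha$- and $\beta$-vertices remain distinguishable: by the explicit description of Remark~\ref{extendmark}, an $\alpha$-vertex is a pure sink, whereas a $\beta$-vertex carries the extra edge to the white vertex $t+1$. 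Thus any equivalence of extended diagrams, fixing $t+1$, must carry $\beta$-vertices to $\beta$-vertices and $\alpha$-vertices to $\alpha$-vertices, and hence restricts to an equivalence of the underlying $(\alpha,\beta)$-markings; comparing the explicit descriptions of the two types of diagram shows the two constructions are mutually inverse on classes.

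Combining the two points gives $\sum_{[\widehat\Gamma]}\mult(\widehat\Gamma,y)=I_y^{\alpha+\beta}\sum_{[\Gamma]}\mult(\Gamma,y)=I_y^{\alpha+\beta}\,N^{\Delta,\delta}(\alpha,\beta)(y)$, which is the assertion after dividing by $I_y^{\alpha+\beta}$. I expect the main obstacle to be the bijectivity check, specifically confirming that recoloring the $\alpha$-vertices black merges no distinct classes: one must check that the distinction between $\alpha$- and $\beta$-vertices genuinely survives in $\widehat\Gamma$ (via the edge to $t+1$), and that the Step~5 ordering of the $\alpha$-vertices is merely compatible with—rather than strictly finer than—the permutation-of-equal-weight-edges equivalence on $\widehat\Gamma$, so that no spurious factor of $\alpha!$ is introduced into the count.
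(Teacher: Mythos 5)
Your proposal is correct and follows essentially the same route as the paper, which likewise deduces the corollary from the bijection between equivalence classes of $(\alpha,\beta)$-marked and extended $(\alpha,\beta)$-marked diagrams together with the identity $\mult(\widehat\Gamma,y)=\mult(\Gamma,y)\prod_i [i]_y^{\alpha_i+\beta_i}$. The paper states both facts as immediate from the definitions; your verification of the multiplicity ratio (the reinstated $\alpha$-edges, the duplicated $\beta$-edges, and the trivial weight-$1$ edges from the vertex $0$) and of the distinguishability of $\alpha$- and $\beta$-vertices in $\widehat\Gamma$ simply makes those claims explicit.
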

\begin{proof} The result follows from the following easy facts:
(1) $(\alpha,\beta)$-marked floor diagram and extended $(\alpha,\beta)$-marked floor diagram are obviously in bijection, and the same holds for the equivalence classes. (2)  By definition, if $\Gamma$ is an $(\alpha,\beta)$-marked floor diagram, and $\widehat \Gamma$ is its extension, then by definition
$$\mult(\widehat
\Gamma,y)=\mult(\Gamma,y)\prod_{i}([i]_y^{\alpha_i+\beta_i}).$$
\end{proof}

\section{Feynman diagrams}\label{Feynman}

We associate Feynman diagrams to monomials $M=m_0 \cdots m_l$, where each $m_i$ is of the form $a_{-\mu}a_{\nu}$ or  $b_{-k}b_{k}$ or $b_{-k}$ for partitions $\mu,\nu$
and positive integers $k$.

\begin{definition}\label{Feyndia}
A \emph{Feynman diagram}  for $M= m_0\cdots m_l$ as above is a directed weighted graph with vertices $0,\ldots,l$.
If 
$m_i=a_{-\mu}a_{\nu}$  then the vertex $i$ is white with $|\mu|$
incoming edges with $\mu_j$ of weight $j$ for all $j$ and $|\nu|$
outgoing  edges with $\nu_j$ of weight $j$ for all $j$. If $m_i=b_{-k}b_k$, then the vertex $i$ is black with one incoming and one outcoming edge of both weight $k$. 
If $m_i=b_{-k}$, then the vertex $i$ is black with one incoming edge of  weight $k$. 
 All edges are directed $i\to j$ with $i<j$ and they connect vertices of different color.
 An equivalence $\Gamma_1\to \Gamma_2$ of Feynman diagrams for $M$ is an isomorphism of weighted directed colored graphs, by permutation of the edges of the same weight, leaving the white vertices fixed.
For an edge $e$ of a Feynman diagram we denote its weight by $w(e)$. 
For a Feynman diagram $\Gamma$, its multiplicity is 
$$\mult(\Gamma,y):=\prod_{e \text{ edges of }\Gamma} [w(e)]_y.$$
\end{definition}
The following can be viewed as a version 
of the classical Wick's theorem \cite{W}.
\begin{proposition}[Wick's theorem] Let $M=m_1\cdots m_l$ be a monomial in the $a_{-\mu} a_{\nu}$, $b_{-k}b_{k}$, $b_{-k}$. Then
$$\<M\>=\sum_{[\Gamma]} \mult(\Gamma,y),$$
where the sum runs over all equivalence classes of Feynman diagrams for $M$.
\end{proposition}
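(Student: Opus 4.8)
The plan is to compute the vacuum expectation value $\<M\>$ by repeatedly moving annihilation operators to the right until they hit $v_\emptyset$, and to show that the combinatorics of this process is exactly encoded by Feynman diagrams. Concretely, write $M = m_1 \cdots m_l$ and expand each factor $a_{-\mu_i} a_{\nu_i}$, $b_{-k_i} b_{k_i}$, or $b_{-k_i}$ into its constituent generators using \eqref{abmu}. The quantity $\<M\>$ is then a sum of vacuum expectation values of products of individual operators $a_{\pm j}$, $b_{\pm k}$. I would evaluate each such product by the standard normal-ordering argument: using the commutation relations \eqref{Heiscomm}, every annihilation operator $a_j$ or $b_k$ ($j,k>0$) must eventually be commuted past creation operators to its right; since $a_j v_\emptyset = b_k v_\emptyset = 0$, the only surviving contributions come from fully contracting each annihilation operator against a matching creation operator $b_{-k}$ (with $[b_k, b_{-k}] = [k]_y$), or against the appropriate $a$-generator via the adjointness built into \eqref{innprod}. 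An annihilation operator at vertex $i$ paired with a creation operator at vertex $j$ must have $i < j$ (the creator sits to the right), contributes a factor $[k]_y$ where $k$ is the common weight, and since $[a_n,a_m]=0$ the $a$-operators only contract against $b$'s trivially — more precisely the $a$'s and $b$'s decouple, so contractions happen within the $b$-sector (matching $b_k$ to $b_{-k}$) and the $a$-sector is handled by the inner product formula \eqref{innprod}.

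The key observation is that a complete system of such surviving contractions is precisely a Feynman diagram in the sense of \defref{Feyndia}: each contraction of an annihilation operator of weight $k$ sitting in factor $m_i$ with a creation operator of weight $k$ in factor $m_j$ ($i<j$) becomes a directed edge $i \to j$ of weight $k$, connecting vertices of different color exactly because the combinatorial setup forces outgoing $a_\nu$-edges (white vertices) to meet the incoming slots of black $b$-vertices, and vice versa. The constraint that at each vertex the number of incoming resp. outgoing edges of weight $j$ equals $\mu_j$ resp. $\nu_j$ (for $a_{-\mu}a_\nu$) or equals $1$ of weight $k$ (for $b_{-k}b_k$, $b_{-k}$) is exactly the requirement that every operator be contracted. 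Thus I would set up an explicit bijection between the terms surviving in the normal-ordering computation and the Feynman diagrams for $M$, and check that the weight $[k]_y$ accrued by contracting a weight-$k$ pair matches the factor $[w(e)]_y$ assigned to the corresponding edge $e$.

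The final step is bookkeeping of multiplicities. The operators $a_\mu$, $a_{-\mu}$, etc.\ carry the normalizing factors $1/\mu_i!$ from \eqref{abmu}, and the inner product \eqref{innprod} carries matching factors $[i]_y^{\mu_i}/\mu_i!$. When I count the number of distinct ways to pair the $\mu_j$ indistinguishable weight-$j$ annihilation slots at a vertex with matching creation slots elsewhere, the $\mu_j!$ orderings of these slots cancel exactly against the $1/\mu_j!$ in the definition of $a_{-\mu}$ and $a_\nu$; what remains is precisely the count of \emph{equivalence classes} of Feynman diagrams (where edges of equal weight may be permuted), each weighted by $\prod_e [w(e)]_y = \mult(\Gamma, y)$. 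I expect the main obstacle to be this careful accounting of symmetry factors: one must verify that the automorphisms permitted in the equivalence relation on Feynman diagrams (permutations of equal-weight edges fixing white vertices) correspond exactly to the overcounting that the factorials $\mu_j!$ in \eqref{abmu} and in \eqref{innprod} are designed to cancel, so that summing $\mult(\Gamma,y)$ over equivalence classes $[\Gamma]$ reproduces $\<M\>$ with the correct coefficient. Once this matching of symmetry factors is established, the theorem follows.
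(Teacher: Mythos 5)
Your overall strategy---expand each factor into individual generators via \eqref{abmu}, normal order, identify the surviving contraction patterns with Feynman diagrams, and check that the $1/\mu_j!$ symmetry factors cancel against the overcounting absorbed by the equivalence relation on diagrams---is the same as the paper's, and your final paragraph on symmetry factors matches the paper's step relating graphs for the expanded monomial to Feynman diagrams for $M$. However, there is a concrete error in the contraction rule at the heart of your argument. You write that the surviving contributions come from contracting $b_k$ against $b_{-k}$ ``with $[b_k,b_{-k}]=[k]_y$'' and that ``contractions happen within the $b$-sector \ldots\ and the $a$-sector is handled by the inner product formula.'' This contradicts \eqref{Heiscomm}: in this Heisenberg algebra, modeled on the hyperbolic lattice, $[a_n,a_m]=[b_n,b_m]=0$ and the only nonzero brackets are $[a_n,b_m]=[n]_y\delta_{n,-m}$. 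So the $a$- and $b$-sectors do not decouple; on the contrary, every nontrivial contraction pairs an $a$-generator with a $b$-generator: an annihilation operator $a_k$ (an outgoing slot of a white vertex) contracts only with a creation operator $b_{-k}$ (an incoming slot of a black vertex) to its right, and $b_k$ contracts only with $a_{-k}$.

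This cross-pairing is precisely the reason every edge of a Feynman diagram joins vertices of different colors. Under the rule you state, the surviving pairings would instead produce black--black edges (for instance, pairing the $b_k$ of one factor $b_{-k}b_k$ with the $b_{-k}$ of a later such factor), which is not the diagram structure of \defref{Feyndia}, so the bijection you claim between surviving terms and Feynman diagrams would fail as written. Also, the inner product \eqref{innprod} is not a separate mechanism here: the entire vacuum expectation value is computed from the commutators together with $a_nv_\emptyset=b_nv_\emptyset=0$ for $n>0$, exactly as in the paper's inductive evaluation. Once the contraction rule is corrected to pair $a$'s with $b$'s, the rest of your argument goes through and coincides with the paper's proof.
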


\begin{proof}
To simplify notations we will write $a[0]_i:=a_i$, $a[1]_i:=b_i$. Note that the commutation relations \eqref{Heiscomm} are 
$$\big[a[s]_i,a[t]_j\big]=[i]_y\delta_{i,-j}\delta_{s,1-t},\quad s,t\in\{0,1\}.$$

(1) Now let $N=a[s_1]_{i_1}\cdots a[s_l]_{i_l}$ be any monomial in the $a[s]_{i}$, $i\in \Z_{\ne0}$, $s\in \{0,1\}$.
We compute $\<N\>$. If $i_1\le 0$,  then 
$$\<N\>=\big\<a[s_1]_{-i_1}v_{\emptyset}\bigm|a[s_2]_{i_2}\cdots a[s_l]_{i_l}v_{\emptyset}\big\>=0,$$
because $a[s_1]_{-i_1}v_{\emptyset}=0$.
If $i_1>0$, applying the commutation relation, we get 
$$\<N\>=\sum_{\big\{m \bigm |\ i_{m}=-i_{1},\ s_{m}=1-s_{1}\big\}} [i_1]_{y}\big\<a[s_2]_{i_2}\cdots\widehat{a[s_m]_{i_m}}\cdots a[s_l]_{i_l}\big\>,$$ where the
$\widehat{\phantom{m}}$ means that the factor is removed.
By induction this gives that
$\<N\>=0$, if $l$ is odd, and if $l=2m$, then
\begin{equation}\label{pairs}\<N\>=\sum_{I_1,\ldots,I_m} \prod_{j=1}^m [w(I_j)]_y.\end{equation}
Here the sum is over all decompositions of $\{1,\ldots, 2m\}$ into disjoint subsets of $2$ elements
$I_j=\{I_j^1,I_j^2\}$, with the following properties:
$$I_j^1<I_j^2,\quad i_{I_j^1}>0,\quad  i_{I_j^2}=-i_{I_j^1}, \quad s_{I_j^2}=1-s_{I_j^1},$$ 
and we write 
$w(I_{j})=i_{I_j^1}$. 

We can view this as a count of directed graphs with multiplicities.   For each factor $a[s_m]_{i_m}$ we place a vertex at the point $m$.
The vertex is white if $s_m=0$ and black if $s_m=1$. It has one incoming edge of weight $-i_m$ if $i_m<0$, it has one outgoing edge of weight $i_m$ if $i_m>0$, and no further edges. A graph $\Gamma$ for $\<N\>$ is a directed graph  with these vertices and edges, so that  every edge contains two vertices, and every vertex is connected by an edge to one other of a different color. We denote $w(e)$ the weight of the edges of $\Gamma$. The multiplicity of $\Gamma$ is 
$$\mult(\Gamma,y)=\prod_{e \text{ edge of }\Gamma} [w(e)]_y.$$
 Clearly there is a bijection 
between the decomposion of $1,\ldots,l$ into disjoint pairs of integers as in \eqref{pairs} and the graphs for $\<N\>$.
Thus by \eqref{pairs} we have 
$$\<N\>=\sum_{\Gamma\text{ graph for } N} \mult(\Gamma,y).$$

(2) Now let again $M=m_1\cdots m_l$ be a monomial in the $(a_{-\mu}a_{\nu})$,  $(b_{-k}b_{k})$, $b_{-k}$. 
Assume that the factors of the form $(a_{-\mu}a_{\nu})$ are $m_{i_1},\ldots, n_{i_n}.$ Then write 
$m_{i_s}=a_{-\mu^s}a_{\nu^s}$ for $s=1,\ldots,n$, for partitions $\mu^s=(1^{\mu^s_1},2^{\mu^s_2},\ldots)$, $\nu^s=(1^{\nu^s_1},2^{\nu^s_2},\ldots)$.
Then $M=\frac{1}{\prod_{s=1}^n \mu^s!\nu^s!} N$,  where 
$N=\prod_{i=1}^l \tilde m_i,$ with $\tilde m_i=m_i$ for $i\not \in \{i_1,\ldots,i_n\}$ and $\tilde m_{i_{s}}=\left(\prod_{j} a_{-j}^{\mu^s_j}\right)\left(\prod_{j} a_{j}^{\nu^s_j}\right).$
Thus $$\<M\>=\frac{1}{\prod_{s=1}^n \mu^s!\nu^s!}\sum_{\Gamma} \mult(\Gamma,y),$$
 where $\Gamma$ runs through the diagrams for $\<N\>$.
 
  Finally we need to relate the Feynman diagrams for $N$ to those for $M$.
 We obtain all the Feynman diagrams for $M$ from the diagrams for $N$ by replacing all the vertices corresponding to one factor
$(a_{-\mu^s}a_{\nu^s})$ by one white vertex, and replacing the two vertices corresponding to a factor $b_{-k}b_k$ by one black vertex, and keeping all the edges and their multiplicities and orientations, and have an edge connected to a vertex in $M$
if it was connected to one of the vertices in $N$ corresponding to it. Under this operation the $\prod_{s=1}^n \mu^s!\nu^s!$ graphs corresponding to the reorderings of the factors in each of the factors $\prod_{j}(a_{-j})^{\mu^s_j}\cdot \prod_j (a_j)^{\nu^s_j}$ are mapped to equivalent Feynman diagrams for $M$
(and two diagrams are mapped to equivalent Feynman diagrams if and only if they are related by such a reordering). The multiplicities of the graphs are preserved.
Thus the  claim follows.
\end{proof}

\begin{proof}[Proof of \thmref{NDEL}]
It is enough to prove \eqref{NDEL2}, because \eqref{NDEL1} is a special case.

Let $t:=\#\Delta-\delta-1-\|\alpha\|-\|\beta\|+|\alpha|+|\beta|$ and $s=t-|\alpha|$.
We can write $I_y^{\alpha+\beta}$ times the right hand side of \eqref{NDEL2} as
\begin{equation}\label{Feyneq}
\alpha!\left\<a_{(1^{d_\delta^t})}\left(\sum_{R,L} \Coeff_{T^{R-L}}[H(T)^s]\right)b_{-\alpha}a_{-\beta}\right\>=
\left\<a_{(1^{d_\delta^t})}\left(\sum_{R,L} \Coeff_{T^{R-L}}[H(T)^s]\right)\left(\prod_{i}b_{-i}^{\alpha_i}\right)a_{-\beta}\right\>,\end{equation}
with $R$, $L$ running through the orderings of $r_\Delta$ and $l_{\Delta}$.
Thus in order to prove the theorem it is enough to show:

{\bf Claim:} {\it The Feynman diagrams for the right hand side of \eqref{Feyneq} are precisely the extended $(\alpha,\beta)$-marked $\Delta$-floor diagrams.}

Fix $R=(R_1,\ldots,R_{h_{\Delta}})$, $L=(L_1,\ldots,L_{h_{\Delta}})$.
Then by definition 
$$a_{(1^{d_\Delta^t})}\Coeff_{T^{R-L}}[H(T)^s]\left(\prod_{i}b_{-i}^{\alpha_i}\right)a_{-\beta}$$ 
is the sum over all monomials $m_0\cdots m_{t+1}$, satisfying the following:
\begin{enumerate}
\item $m_0=a_{(1^{d_\Delta^t})}$
\item There exist indices $0< i_1<\ldots<i_{h_{\Delta}}\le s$ such that 
$m_{i_j}= a_{-\mu^j}a_{\nu^j}$ for partitions $\mu^j,\nu^j$  with  $\|\nu_j\|-|\mu_j\|=R_j-L_j$ for $j=1,\ldots, h_\Delta$.
\item The other $m_l$ with $l\le s$ are of the form
$b_{-k_l}b_{k_l}$ for some $k_l$.
\item For all $l\ge 0$ let $s_l=s+\sum_{i\le l} \alpha_i.$ Then $m_i=b_{-i}$ for all $i\in \{s_{l}+1,\ldots,s_{l+1}\}$.
\item $m_{t+1}=a_{-\beta}$.
\end{enumerate}
Now by \defref{Feyndia} the Feynman diagrams up to equivalence for these monomials are precisely the extended $(\alpha,\beta)$-marked $\Delta$-floor diagrams
up to equivalence, as described in \remref{extendmark}.
\end{proof}

\bibliographystyle{amsplain}
\bibliography{References_Florian}

\def\polhk#1{\setbox0=\hbox{#1}{\ooalign{\hidewidth
  \lower1.5ex\hbox{`}\hidewidth\crcr\unhbox0}}} \def\cprime{$'$}
  \def\cprime{$'$}
\providecommand{\bysame}{\leavevmode\hbox to3em{\hrulefill}\thinspace}
\providecommand{\MR}{\relax\ifhmode\unskip\space\fi MR }
\providecommand{\MRhref}[2]{%
  \href{http://www.ams.org/mathscinet-getitem?mr=#1}{#2}
}
\providecommand{\href}[2]{#2}
\begin{thebibliography}{10}

\bibitem{AB10}
F.~Ardila and F.~Block, \emph{Universal polynomials for {S}everi degrees of
  toric surfaces}, Adv. Math. \textbf{237} (2013), 165--193.

\bibitem{BG14}
F.~Block and L.~G\"ottsche, \emph{{R}efined curve counting with tropical
  geometry}, Preprint, 2014.

\bibitem{BM2}
E.~Brugall{\'e} and G.~Mikhalkin, \emph{Floor decompositions of tropical
  curves: the planar case}, Proceedings of {G}\"okova {G}eometry-{T}opology
  {C}onference 2008, G\"okova Geometry/Topology Conference (GGT), G\"okova,
  2009, pp.~64--90.

\bibitem{CH98}
L.~Caporaso and J.~Harris, \emph{Counting plane curves of any genus}, Invent.
  Math. \textbf{131} (1998), no.~2, 345--392.

\bibitem{CoPa12}
Y.~Cooper and R.~Pandharipande, \emph{A {F}ock space approach to {S}everi
  degrees}, {Preprint}, arXiv:1210.8062.

\bibitem{FM}
S.~Fomin and G.~Mikhalkin, \emph{Labeled floor diagrams for plane curves}, J.
  Eur. Math. Soc. (JEMS) \textbf{12} (2010), no.~6, 1453--1496.

\bibitem{GS12}
L.~G{\"o}ttsche and V.~Shende, \emph{Refined curve counting on complex
  surfaces}, arXiv:1208.1973, 2012.

\bibitem{IM12}
I.~Itenberg and G.~Mikhalkin, \emph{On {B}lock-{G}\"ottsche multiplicities for
  planar tropical curves}, Int. Math. Res. Not. IMRN (2013), no.~23,
  5289--5320, arXiv:1201.0451.

\bibitem{Mi05}
G.~Mikhalkin, \emph{Enumerative tropical geometry in {${\mathbb{R}^2}$}}, J.
  Amer. Math. Soc. \textbf{18} (2005), 313--377.

\bibitem{Va00}
R.~Vakil, \emph{Counting curves on rational surfaces}, Manuscripta Math.
  \textbf{102} (2000), no.~1, 53--84.

\bibitem{W}
G.C. Wick, \emph{The evaluation of the collision matrix}, Phys. Rev.
  \textbf{80} (1950), 268--272.

\end{thebibliography}

\end{document}